\newtheorem{teo}{Theorem}[section]
\newtheorem{pro}[teo]{Proposition}
\newtheorem{coro}[teo]{Corollary}
\newtheorem{lem}[teo]{Lemma}
\theoremstyle{definition}
\newtheorem{defi}[teo]{Definition}
\newtheorem{exam}[teo]{Example}
\newtheorem{rem}[teo]{Remark}
\newtheorem{nota}[teo]{Notation}
\newcommand{\V}{\mathbf V}
\newcommand{\I}{\mathbf I}
\newcommand{\R}{\mathbb R}
\newcommand{\N}{\mathbb N}
\newcommand{\Z}{\mathbb Z}
\newcommand{\C}{\mathbb C}
\newcommand{\Oo}{\mathcal O}
\newcommand{\sd}{\check{\sigma}}
\newcommand{\Si}{\mbox{Sing}(X)}
\newcommand{\rk}{\mbox{rank}}
\newcommand{\J}{\mathcal J}
\newcommand{\Jc}{\mbox{Jac}}
\newcommand{\VJ}{\mathbf V(\mathcal J)}
\newcommand{\IG}{I_{\Gamma}}
\newcommand{\cero}{\mathbf{0}}
\newcommand{\zl}{\mathbf{0}_l}
\newcommand{\zm}{\mathbf{0}_m}
\newcommand{\zn}{\mathbf{0}_n}
\newcommand{\ul}{\mathbf{1}_l}
\newcommand{\um}{\mathbf{1}_m}
\newcommand{\un}{\mathbf{1}_n}
\begin{document}

\title{On the zero locus of ideals defining the Nash blowup of toric surfaces}

\author{Enrique Ch\'avez Mart\'inez, Daniel Duarte}

\maketitle

\begin{abstract}
We consider the problem of finding an ideal whose blowup defines the Nash blowup of a toric surface and such 
that its zero locus coincides with the singular set of the toric surface.
\end{abstract}


\section*{Introduction}

The Nash blowup of an equidimensional algebraic variety $X$ is a modification that replaces singular points 
by limits of tangent spaces to non-singular points. It has been proposed to iterate the Nash blowup to solve 
singularities of algebraic varieties (see \cite{No, S}). This question has been studied in 
(\cite{No,R,GS-1,GS-2,Hi,Sp,At,GM,GT,D}).

In \cite{No, GS-1} a method to compute an ideal whose blowup defines the Nash blowup of $X$ is provided. 
It is also shown that if $\J$ is such an ideal then $\Si\subset\V(\J)$, where $\Si$ denotes the singular locus of $X$. 
In addition, if $X$ is a complete intersection, then $\Si=\V(\J)$. The question we are interested in is the following: 
\textit{For non-complete intersections, is there a choice of an ideal defining the Nash blowup such that its zero locus 
coincides with the singular locus of $X$?}

This question is a simplified version of a problem that appears in \cite{Gi}. In that paper, the zero set of an ideal defining 
the \textit{relative} Nash blowup of certain analytic spaces is studied. The author shows that being able to have some control 
over that set implies important results concerning equisingularity problems (see \cite[Section 8]{Gi}). Our study was motivated 
by that consideration. We present here a first step in the study of the zero locus of different ideals defining the 
Nash blowup of toric varieties.

As is usually the case with toric varieties, the question can be explored using combinatorial methods. We explore the case of
toric varieties and specifically of dimension two mainly for three reasons:
\begin{enumerate}
\item Minors of the Jacobian matrix of the defining equations of a toric variety are monomials (cf. lemma \ref{l. minors}).
\item Existence of an explicit set of defining equations of a toric surface (cf. corollary \ref{c. equations}).
\item A characterization of complete intersection for normal toric surfaces (see \cite{Ri}). 
\end{enumerate}
Using these facts we will show that the answer to the question is positive when the dimension of the singular locus is 1. 
We will also show that the answer is negative if the dimension of the singular locus is 0, whenever the ideal defining the 
Nash blowup is obtained from the binomial ideal of the toric surface and by using the method appearing in \cite{No, GS-1}.

We want to emphasize that our result gives a positive consequence of removing the hypothesis of normality in the definition 
of toric varieties. It is known that some nice properties of normal toric varieties are no longer valid in the non-normal case. 
For instance, normal toric varieties are Cohen-Macaulay (\cite{Dan}) and a normal toric surface is a complete intersection if and
only if it is a hypersurface (\cite{Ri}). Both results are no longer true without the assumption of normality 
(see \cite[Examples 6.5.6(6)]{dJP} and \cite[Section 4.1.2]{Ch}). 
Our result illustrates a nice property that holds only for some non-normal toric surfaces.


\section{The Nash blowup of a toric variety}

Let us start by recalling the definition of the Nash blowup of an equidimensional algebraic variety.

\begin{defi}
Let $X\subset\C^s$ be an equidimensional algebraic variety of dimension $d$. Consider the Gauss map:
\begin{align}
G:X\setminus\Si&\rightarrow Gr(d,s)\notag\\
x&\mapsto T_xX,\notag
\end{align}
where $Gr(d,s)$ is the Grassmanian of $d$-dimensional vector spaces in $\C^s$, and $T_xX$ is the
tangent space to $X$ at $x$. Denote by $X^*$ the Zariski closure of the graph of $G$. Call $\nu$ 
the restriction to $X^*$ of the projection of $X\times Gr(d,s)$ to $X$. The pair $(X^*,\nu)$ is called the Nash 
blowup of $X$.
\end{defi}

By construction, $\nu$ is a birational morphism (it is an isomorphism over the set of non-singular points). 
The following theorem gives a method to find an ideal whose blowup defines the Nash blowup 
(see \cite[Theorem 1, Remark 2]{No}, \cite[Chapitre 2, Section 1]{GS-1}).

\begin{teo}\label{t. ideal Nash}
Let $X\subset\C^s$ be an irreducible algebraic variety of dimension $d$ and $\C[X]$ its coordinate ring. Let $r=s-d$. 
Let $\{f_1,\ldots,f_r\}\subset\I(X)$ be any set of $r$ polynomials such that the Jacobian matrix $M=\Jc(f_1,\ldots,f_r)$ has rank $r$. 
Let $\J\subset\C[X]$ be the ideal generated by all $r\times r$ minors of $M$. Then the blowup of $X$ along $\J$ is isomorphic 
to the Nash blowup of $X$. In addition, if $X$ is a complete intersection then $f_1,\ldots,f_r$ can be taken as a minimal set of 
generators of $\I(X)$.
\end{teo}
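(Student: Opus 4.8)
The plan is to reduce everything to the local, affine description of the Nash blowup in terms of the Gauss map, and to compare two presentations of the sheaf of modules whose blowup we are taking. First I would recall the standard fact that the Nash blowup $X^*$ is the blowup of $X$ along the $d$-th Fitting ideal of the module of Kähler differentials $\Omega_{X}$ (equivalently, the image of $\wedge^d \Omega_X \to \wedge^d(\C^s \otimes \Oo_X)$, which on an affine chart is generated by the $d\times d$ minors of the Jacobian of a full set of generators of $\I(X)$). This is the content of the references \cite{No}, \cite{GS-1}; I would state it as the starting point rather than reprove it. So the task becomes: show that the ideal $\J$ generated by the $r\times r$ minors of $M=\Jc(f_1,\ldots,f_r)$, for an arbitrary choice of $r=s-d$ elements of $\I(X)$ whose Jacobian has generic rank $r$, defines the same blowup — i.e. has the same Rees algebra up to integral closure / the same blowup scheme — as the Fitting ideal above.

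The key steps, in order, would be: (1) Work Zariski-locally on $X$. On the open locus where $\rk M = r$, the rows $df_1,\ldots,df_r$ span the conormal space, and the quotient $\Omega_X = (\bigoplus \Oo_X\, dx_i)/(df_1,\ldots,df_r)$ is locally free of rank $d$; hence $\wedge^d \Omega_X$ is an invertible sheaf there, and both $\J$ and the Fitting ideal are locally principal and in fact equal on this locus (they both compute the Plücker coordinates of $T_xX$). So the two blowups agree over $X \setminus \Si$, and over that locus each equals $X$ itself since $\nu$ is an isomorphism there. (2) To extend across the singular locus, I would argue that the relevant ideals have the same integral closure (equivalently, the same normalized blowup), using that the $r\times r$ minors of $M$ generate the image of $\wedge^r$ of the conormal map $\I(X)/\I(X)^2 \to \bigoplus \Oo_X\, dx_i$ restricted to the submodule generated by $f_1,\ldots,f_r$; since these $f_i$ generate a submodule of $\I(X)/\I(X)^2$ that agrees with the whole thing after localizing at every point of $X\setminus\Si$ and the blowup only depends on the module up to such generic identifications, the Rees algebras agree up to the pieces supported on $\Si$, which do not affect the blowup as $X$ is reduced and irreducible. (3) For the last sentence: if $X$ is a complete intersection, then $\I(X)$ is generated by exactly $r$ elements $g_1,\ldots,g_r$, and the Jacobian criterion (applied at a smooth point, which exists since $X$ is irreducible) forces $\rk\Jc(g_1,\ldots,g_r)=r$ generically; so this minimal generating set is a legitimate choice in the theorem, and moreover $\I(X)/\I(X)^2$ is then a free $\C[X]$-module on the classes of the $g_i$, so the Fitting-ideal computation is literally the ideal of $r\times r$ minors of $\Jc(g_1,\ldots,g_r)$ with no localization needed.

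The main obstacle I expect is step (2): controlling the behaviour of $\J$ over the singular locus and showing that an \emph{arbitrary} choice of $f_1,\ldots,f_r$ — not a minimal generating set, possibly not even generating $\I(X)$ — still yields the correct blowup. The subtlety is that for a non-complete intersection the submodule of $\I(X)/\I(X)^2$ generated by $r$ chosen elements can differ from the full conormal module along $\Si$, so $\J$ and the Fitting ideal $\mathrm{Fitt}_d(\Omega_X)$ need not be equal as ideals; one only expects them to have the same integral closure, and the honest way to see this is via the universal property of the Nash blowup (any modification dominating $X$ that locally trivializes the Gauss map factors through $X^*$) combined with a valuative check. I would therefore phrase the heart of the argument as: the blowup of $\J$ is a birational modification of $X$ over which $\Omega_X$ becomes locally free of rank $d$ (because the pulled-back minors generate an invertible ideal, trivializing $\wedge^d$ of the pulled-back differentials), hence by the universal property it dominates $X^*$; conversely $X^* \to X$ also makes this ideal invertible, so it dominates the blowup of $\J$; the two dominations are mutually inverse, giving the isomorphism over $X$.
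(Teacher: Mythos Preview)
The paper does not prove this theorem; it is quoted from \cite{No} and \cite{GS-1} and used as a black box. So there is no paper proof to compare your proposal against, and I assess it on its own merits.

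There is a genuine gap in step~(2), and your final paragraph does not close it. You assert that on the blowup of $\J$ the sheaf $\Omega_X$ becomes locally free of rank $d$ ``because the pulled-back minors generate an invertible ideal, trivializing $\wedge^d$ of the pulled-back differentials.'' But $f_1,\ldots,f_r$ are not assumed to generate $\I(X)$, so the sequence $\Oo_X^r\xrightarrow{(df_\lambda)}\Oo_X^s\to\Omega_X\to 0$ is in general \emph{not} a presentation of $\Omega_X$; consequently the $r\times r$ minors of $(df_\lambda)$ are not $\mathrm{Fitt}_d(\Omega_X)$, and making them invertible upstairs does not force $\Omega_X$ to be locally free. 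You noticed this yourself (``the submodule \ldots can differ from the full conormal module''), but then invoked the universal property as though the hypothesis were met. The earlier suggestion via integral closure has the same defect: equal integral closures yields only equal \emph{normalized} blowups, not equal blowups.

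The repair is more direct than the Fitting-ideal route. Compose the Gauss map with the Pl\"ucker embedding $Gr(d,s)\cong Gr(r,s)\hookrightarrow\mathbb P^{\binom{s}{r}-1}$. On the dense open set $U=\{x\in X\setminus\Si:\rk M(x)=r\}$ the row span of $M(x)$ is an $r$-dimensional subspace of the $r$-dimensional conormal space $N^*_xX$, hence equals it; thus on $U$ the Pl\"ucker coordinates of $T_xX$ are exactly the $r\times r$ minors of $M(x)$. The rational map $X\dashrightarrow\mathbb P^{\binom{s}{r}-1}$ defined by those minors therefore agrees with the Gauss map (in Pl\"ucker coordinates) on the dense open $U$. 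Since $X$ is irreducible, the closures of the two graphs in $X\times\mathbb P^{\binom{s}{r}-1}$ coincide; one is $X^*$ by definition, the other is the blowup of $X$ along $\J$ by the graph description of a blowup. This is essentially Nobile's original argument. Your step~(3) on the complete-intersection addendum is fine as written.
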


\begin{rem}\label{r. inclusion}
Notice that theorem \ref{t. ideal Nash} implies $\Si\subseteq\V(\J)$. 
\end{rem}

We are interested in studying the ideal $\J$ of the previous theorem in the case of toric surfaces. Let us recall the
definition of an affine toric variety (see, for instance, \cite[Section 1.1]{CLS} or \cite[Chapters 4, 13]{St}).

Let $\Gamma\subset\Z^d$ be a semigroup generated by a finite set $\Gamma_0=\{\gamma_1,\ldots,\gamma_s\}\subset\Z^d$  
such that $\Z\Gamma_0=\{\sum_i \lambda_i\gamma_i|\lambda_i\in\Z\}=\Z^d$. The set $\Gamma_0$ 
induces a homomorphism of semigroups
\begin{align}\label{e. pi}
\pi_{\Gamma_0}:\N^s\rightarrow\Z^d,\mbox{ }\mbox{ }\mbox{ }\alpha=(\alpha_1,\ldots,\alpha_s)
\mapsto \alpha_1\gamma_1+\cdots+\alpha_s\gamma_s.
\end{align}
Consider the ideal
$$I_{\Gamma}:=\langle x^{\alpha}-x^{\beta}|\alpha,\beta\in\N^s,\mbox{ }\pi_{\Gamma_0}(\alpha)=\pi_{\Gamma_0}
(\beta)\rangle\subset\C[x_1,\ldots,x_s].$$

\begin{defi}
We call $X:=\V(I_{\Gamma})\subset\C^s$ the toric variety defined by $\Gamma$.
\end{defi}


It is well known that a variety obtained in this way is irreducible, contains a dense open set isomorphic 
to $(\C^*)^d$ and such that the natural action of $(\C^*)^d$ on itself extends to an action on $X$. 

\begin{rem}
In the case of toric varieties, there is a combinatorial way to compute an ideal whose blowup defines the Nash blowup.
This fact is a consequence of theorem \ref{t. ideal Nash} (see \cite{GS-1,GT,LJ-R}). 
\end{rem}

According to remark \ref{r. inclusion}, $\Si\subseteq\V(\J)$. In addition, theorem \ref{t. ideal Nash} states that,
for complete intersections, $\Si=\V(\J)$. The question we are interested in is the following: For toric surfaces $X$ which are 
non-complete intersections, is there a choice of an ideal $\J$ defining the Nash blowup of $X$ such that $\Si=\V(\J)$?
Our main theorem gives a partial answer to this question. 


\subsection{An example}

Let us start our discussion with an example. All examples that appear in this note were computed using
the software $\mathtt{SINGULAR}$ $\mathtt{4}$-$\mathtt{0}$-$\mathtt{2}$ (\cite{DGPS}).

Let $\Gamma\subset\Z^2$ be the semigroup generated by $\{(1,0),(1,1),(1,2),(1,3)\}$. Using 
\cite[Algorithm 4.5]{St}, we can compute the ideal associated to $\Gamma$ which is 
$$I_{\Gamma}=\langle x_1x_3-x_2^2, x_1x_4-x_2x_3, x_2x_4-x_3^2\rangle.$$
Let $X=\V(\IG)\subset\C^4$ be the respective toric surface. It can be checked that
$(0,0,0,0)\in X$ is the only singular point (actually, $X$ is a normal surface).  
Consider the Jacobian matrix of $\IG$:
\[
\Jc(\IG)=
\begin{pmatrix}
x_3& 	-2x_2& 	x_1&  	 0\\
x_4&  	 -x_3&  -x_2&   x_1\\
0&       x_4&  -2x_3&    x_2
\end{pmatrix}
\]
Denote by $\J_{ij}$ the ideal generated by $2\times 2$ minors of the matrix formed by the rows 
$i,j\in\{1,2,3\}$, $i\neq j$, of $\Jc(\IG)\mod\IG$.
We have
\begin{align}
\J_{12}&=\langle x_1^2,x_1x_2,x_1x_3,x_1x_4,x_2x_4\rangle,\notag\\
\J_{13}&=\langle x_1x_2,x_1x_3,x_1x_4,x_2x_4,x_3x_4\rangle,\notag\\
\J_{23}&=\langle x_1x_3,x_1x_4,x_2x_4,x_3x_4,x_4^2\rangle.\notag
\end{align}
According to theorem \ref{t. ideal Nash}, these ideals define the Nash blowup of $X$. 
Notice that the zero locus of these ideals strictly contain $\Si$.


\section{Defining equations and orbits of a toric surface}\label{s. gamma}

In this section we prepare the notation and basic properties regarding defining equations and orbits of
a toric surface. Let 
$$\Gamma_0=\{(a_1,b_1),\ldots,(a_l,b_l),(c_1,d_1),\ldots,(c_m,d_m),(e_1,f_1),\ldots,(e_n,f_n)\}\subset\Z^2.$$
Let $\Gamma:=\Z_{\geq0}\Gamma_0\subset\Z^2$ and $\sd:=\R_{\geq0}\Gamma_0\subset\R^2$ be the semigroup 
and the cone generated by $\Gamma_0$, respectively. Denote by $\sigma\subset\R^2$ the dual cone of $\sd$. 
We assume that (see figure \ref{f. gamma}):
\begin{itemize}
\item[(i)] $l\geq1$, $m\geq0$, $n\geq1$, and $\Gamma_0$ is a minimal set of generators of $\Gamma$.
\item[(ii)] $\Z\Gamma=\Z^2$. In particular, $\dim\sd=2$ and $\sigma$ is strictly convex.
\item[(iii)] $\sd$ is strictly convex.
\item[(iv)] The set $\{(a_i,b_i)|i=1,\ldots,l\}$ generate one of the edges of $\sd$.
\item[(v)] The set $\{(e_k,f_k)|k=1,\ldots,n\}$ generate the other edge of $\sd$.
\item[(vi)] The set $\{(c_j,d_j)|j=1,\ldots,m\}$ is contained in the relative interior of $\sd$.
\item[(vii)] $\cero\in X\subset\C^{l+m+n}$ is a singular point, where $X$ is the toric surface defined by $\Gamma$.
\end{itemize}
\textit{Every toric surface that we will consider from now on is defined by a semigroup satisfying these conditions. 
In particular, from now on $\Gamma$ will denote a semigroup satisfying these conditions.}

\begin{figure}[ht]
\begin{center}
\includegraphics{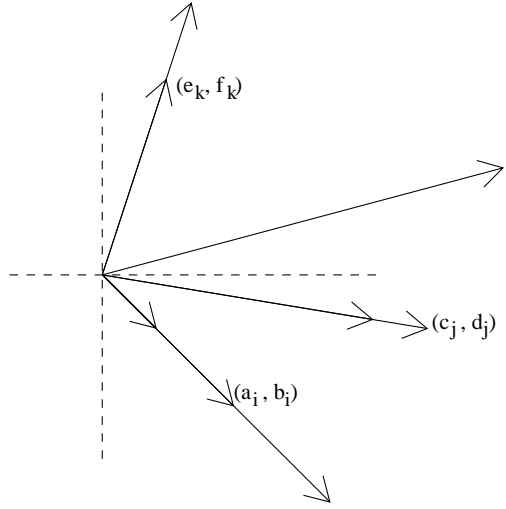}
\caption{The set of generators of $\Gamma$.\label{f. gamma}}
\end{center}
\end{figure}

\begin{nota}
We will use the following notation:
\begin{itemize}
\item The coordinates of $\C^{l+m+n}$ are named $x_1,\ldots,x_l$, $y_1,\ldots,y_m$, $z_1,\ldots,z_n$.
\item $\zl:=(0,\ldots,0)\in\N^{l}$, $\ul:=(1,\ldots,1)\in\N^{l}$. Similarly, we define $\zm,\zn,\um,\un$.
In addition, $\cero:=(\zl,\zm,\zn)$.
\end{itemize}
\end{nota}

\subsection{Defining equations of a toric surface}

In the case of normal toric surfaces, a set of minimal generators for its defining ideal can be computed
explicitly (see \cite{Ri}). Unfortunately, as far as we know, there is no analogous result for the non-normal case.
In this section we determine some of the relations among the elements of $\Gamma_0$.

\begin{lem}
\begin{itemize}
\item[(1)] For any $i,j\in\{1,\ldots,l\}$, $i\neq j$, there exist $\alpha_{ij},\alpha'_{ij}\in\N$ such that 
$\alpha_{ij}(a_i,b_i)=\alpha'_{ij}(a_j,b_j)$.
\item[(2)] For any $i,j\in\{1,\ldots,n\}$, $i\neq j$, there exist $\gamma_{ij},\gamma'_{ij}\in\N$ such that 
$\gamma_{ij}(e_i,f_i)=\gamma'_{ij}(e_j,f_j)$.
\item[(3)] For any $i\in\{1,\ldots,m\}$, there exists $\alpha_{i},\beta_{i},\gamma_{i}\in\N$ such that
$\beta_{i}(c_i,d_i)=\alpha_{i}(a_1,b_1)+\gamma_{i}(e_1,f_1)$.
\end{itemize}
\end{lem}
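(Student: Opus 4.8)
The plan is to exploit the two-dimensional geometry of the cone $\sd$ and the fact that $(a_i,b_i)$ and $(a_j,b_j)$ both lie on a single edge (a ray) of $\sd$, while $(c_i,d_i)$ lies in the interior, sandwiched between the two edges.

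\medskip

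\emph{Parts (1) and (2).} First I would observe that two nonzero vectors $u,v\in\Z^2$ are positively proportional over $\R$ if and only if there exist $p,q\in\N$, not both zero, with $pu=qv$: indeed, if $v=\lambda u$ with $\lambda>0$ then writing $\lambda=q/p$ with $p,q\in\N_{>0}$ gives $pv=qu$. Now by hypothesis (iv), the vectors $(a_1,b_1),\ldots,(a_l,b_l)$ all lie on the same one-dimensional face (edge) of $\sd$, hence they are all positive real multiples of a common primitive generator of that ray; in particular, for $i\neq j$ the vectors $(a_i,b_i)$ and $(a_j,b_j)$ are positively proportional, and the remark above produces $\alpha_{ij},\alpha'_{ij}\in\N$ (which one checks may be taken positive, since both vectors are nonzero) with $\alpha_{ij}(a_i,b_i)=\alpha'_{ij}(a_j,b_j)$. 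Part (2) is identical, using hypothesis (v) for the other edge.

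\medskip

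\emph{Part (3).} Fix $i\in\{1,\ldots,m\}$. By hypothesis (vi), $(c_i,d_i)$ lies in the relative interior of the two-dimensional cone $\sd$, whose extremal rays are generated by $(a_1,b_1)$ and $(e_1,f_1)$ (these are generators of the two edges by (iv), (v)). Since $\sd$ is strictly convex and two-dimensional, $(a_1,b_1)$ and $(e_1,f_1)$ are linearly independent and every point of $\sd$ — in particular $(c_i,d_i)$ — can be written as a nonnegative real combination $(c_i,d_i)=s(a_1,b_1)+t(e_1,f_1)$ with $s,t\in\R_{\geq0}$; moreover $s,t>0$ because $(c_i,d_i)$ is in the \emph{interior}, not on either edge. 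Writing $s=\alpha_i/\beta_i$ and $t=\gamma_i/\beta_i$ over a common denominator $\beta_i\in\N_{>0}$ with $\alpha_i,\gamma_i\in\N$, and clearing denominators, yields $\beta_i(c_i,d_i)=\alpha_i(a_1,b_1)+\gamma_i(e_1,f_1)$, as required.

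\medskip

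\emph{Main obstacle.} There is no deep obstacle here; the statement is essentially linear algebra in the plane together with a clearing-of-denominators argument. The only point requiring a little care is the rationality step: one must note that $(c_i,d_i),(a_1,b_1),(e_1,f_1)$ are all integer vectors, so solving the $2\times 2$ linear system for $(s,t)$ by Cramer's rule gives $s,t\in\Q$, which is what lets us pass to a common integer denominator $\beta_i$. A minor subtlety is that some of the natural-number coefficients could a priori be zero; for (3) positivity of $\beta_i$ is needed for the statement to be meaningful, and positivity of $\alpha_i,\gamma_i$ follows from $(c_i,d_i)$ being in the open cone, though the statement as phrased only demands membership in $\N$, so even this is not strictly necessary.
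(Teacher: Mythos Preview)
Your proposal is correct and follows essentially the same approach as the paper. For (1) and (2) the paper phrases the argument as ``the integer points on a rational line form a rank-$1$ lattice,'' while you phrase it as ``positively proportional integer vectors admit a common integer multiple''; these are equivalent. For (3) both arguments write $(c_i,d_i)$ as a positive real combination of the two edge generators and then pass to integers via rationality, with your version making the Cramer's-rule step explicit where the paper simply invokes an ``elementary computation.''
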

\begin{proof}
The set of points with integer entries on a line of rational slope in $\R^2$ is a lattice of rank 1, so it is 
isomorphic to $\Z$. Since the cone $\sd$ is strictly convex, statements $(1)$ and $(2)$ follow.
To prove $(3)$ we need to find a solution $(\alpha,\beta,\gamma)\in\N^3$ of the following system:
\begin{align}
\beta c_i=\alpha a_1+\gamma e_1,\notag\\
\beta d_i=\alpha b_1+\gamma f_1,\notag
\end{align}
where $a_1f_1-b_1e_1\neq0$, $a_1d_i-b_1c_i\neq0$, $c_if_1-d_ie_1\neq0$ and $(c_i,d_i)$ is contained in the 
relative interior of the cone $\R_{\geq0}((a_1,b_1),(e_1,f_1))$. An elementary computation shows that under
these conditions the system do have a solution in $\N^3$.
\end{proof}

\begin{coro}\label{c. equations}
\begin{itemize}
\item[(1)] For any $i,j\in\{1,\ldots,l\}$, $i\neq j$, there exist $\alpha_{ij},\alpha'_{ij}\in\N$ such that 
$x_i^{\alpha_{ij}}-x_j^{\alpha'_{ij}}\in I_{\Gamma}$.
\item[(2)] For any $i,j\in\{1,\ldots,n\}$, $i\neq j$, there exist $\gamma_{ij},\gamma'_{ij}\in\N$ such that 
$z_i^{\gamma_{ij}}-z_j^{\gamma'_{ij}}\in I_{\Gamma}$.
\item[(3)] For any $i\in\{1,\ldots,m\}$, there exists $\alpha_{i},\beta_{i},\gamma_{i}\in\N$ such that
$y_i^{\beta_{i}}-x_1^{\alpha_{i}}z_1^{\gamma_i}\in I_{\Gamma}$.
\end{itemize}
\end{coro}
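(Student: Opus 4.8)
The plan is to observe that the three statements are nothing more than reformulations of the corresponding parts of the previous lemma, once we unwind the definition of $I_\Gamma$. Recall that $I_\Gamma$ is generated by the binomials $x^\alpha - x^\beta$ with $\alpha,\beta\in\N^{l+m+n}$ satisfying $\pi_{\Gamma_0}(\alpha)=\pi_{\Gamma_0}(\beta)$, where $\pi_{\Gamma_0}$ sends $\alpha=(\alpha_1,\ldots,\alpha_{l+m+n})$ to the corresponding nonnegative combination of the generators listed in $\Gamma_0$. So to certify that a given binomial lies in $I_\Gamma$ it suffices to exhibit the two exponent vectors and check that they have the same image under $\pi_{\Gamma_0}$. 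The key step is therefore purely bookkeeping: match each monomial appearing in the lemma with the right exponent vector in $\N^{l+m+n}$ and the right block of generators.

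For (1), given $i\neq j$ in $\{1,\ldots,l\}$, I would take the $\alpha_{ij},\alpha'_{ij}\in\N$ produced by part (1) of the lemma, so that $\alpha_{ij}(a_i,b_i)=\alpha'_{ij}(a_j,b_j)$. Let $\alpha\in\N^{l+m+n}$ be the vector whose coordinate in the $x_i$ slot equals $\alpha_{ij}$ and whose remaining coordinates vanish, and let $\beta$ be the vector whose coordinate in the $x_j$ slot equals $\alpha'_{ij}$ and whose remaining coordinates vanish. Then $\pi_{\Gamma_0}(\alpha)=\alpha_{ij}(a_i,b_i)=\alpha'_{ij}(a_j,b_j)=\pi_{\Gamma_0}(\beta)$, hence $x^\alpha-x^\beta=x_i^{\alpha_{ij}}-x_j^{\alpha'_{ij}}\in I_\Gamma$. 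Part (2) is proved identically, replacing the $x$-block by the $z$-block and invoking part (2) of the lemma.

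For (3), fix $i\in\{1,\ldots,m\}$ and let $\alpha_i,\beta_i,\gamma_i\in\N$ be as in part (3) of the lemma, so that $\beta_i(c_i,d_i)=\alpha_i(a_1,b_1)+\gamma_i(e_1,f_1)$. Let $\alpha\in\N^{l+m+n}$ have $\beta_i$ in the coordinate corresponding to $y_i$ and zeros elsewhere, and let $\beta$ have $\alpha_i$ in the coordinate corresponding to $x_1$, $\gamma_i$ in the coordinate corresponding to $z_1$, and zeros elsewhere. Then $\pi_{\Gamma_0}(\alpha)=\beta_i(c_i,d_i)=\alpha_i(a_1,b_1)+\gamma_i(e_1,f_1)=\pi_{\Gamma_0}(\beta)$, so $y_i^{\beta_i}-x_1^{\alpha_i}z_1^{\gamma_i}\in I_\Gamma$. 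Since all the genuine arithmetic was already carried out in the lemma, there is no real obstacle in this corollary; the only point requiring care is the indexing that identifies each monomial with the correct exponent vector in $\N^{l+m+n}$ and the correct generator of $\Gamma$.
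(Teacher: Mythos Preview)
Your proof is correct and matches the paper's approach exactly: the corollary is stated without proof in the paper precisely because it is an immediate translation of the preceding lemma via the definition of $I_\Gamma$, which is what you spell out.
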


\begin{lem}\label{l. x-x}
Suppose that there exist $\alpha_1,\alpha_2\in\N^l$, $\beta\in\N^m$, $\gamma\in\N^n$ such that 
$x^{\alpha_1}-x^{\alpha_2}y^{\beta}z^{\gamma}\in I_{\Gamma}$. Then
$\beta=\zm$ and $\gamma=\zn$. Similarly, if $z^{\gamma_1}-x^{\alpha}y^{\beta}z^{\gamma_2}\in I_{\Gamma}$
then $\alpha=\zl$ and $\beta=\zm$.
\end{lem}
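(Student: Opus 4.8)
The plan is to translate the binomial membership $x^{\alpha_1}-x^{\alpha_2}y^{\beta}z^{\gamma}\in I_{\Gamma}$ into an equality in the semigroup $\Gamma$ and then exploit the geometric position of the generators inside the cone $\sd$. Recall that $x^{u}-x^{v}\in I_{\Gamma}$ (in the multigraded sense, writing the coordinates as $x,y,z$) if and only if $\pi_{\Gamma_0}$ sends the two exponent vectors to the same point of $\Z^2$. So the hypothesis says
\begin{equation}\notag
\sum_{i=1}^{l}(\alpha_1)_i(a_i,b_i)=\sum_{i=1}^{l}(\alpha_2)_i(a_i,b_i)+\sum_{j=1}^{m}\beta_j(c_j,d_j)+\sum_{k=1}^{n}\gamma_k(e_k,f_k).
\end{equation}
The left-hand side lies on the edge of $\sd$ spanned by $\{(a_i,b_i)\}$ (condition (iv)), since it is a nonnegative combination of vectors on that edge. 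Hence the right-hand side is a point $P$ of that same edge, call its supporting line $\ell$.

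First I would observe that every generator appearing on the right-hand side lies in the cone $\sd$, which is on one side of $\ell$ (it is a face of $\sd$, and $\sd$ is strictly convex by condition (iii)). A nonnegative combination of vectors lying in a closed half-plane bounded by $\ell$ can only land on $\ell$ if every vector with positive coefficient already lies on $\ell$. The generators $(e_k,f_k)$ span the \emph{other} edge of $\sd$ (condition (v)), so none of them lies on $\ell$ — if it did, $\sd$ would be contained in $\ell$ and fail to be two-dimensional, contradicting (ii). Therefore $\gamma_k=0$ for all $k$, i.e. $\gamma=\zn$. Similarly, the interior generators $(c_j,d_j)$ lie in the relative interior of $\sd$ (condition (vi)) and hence are not on the boundary line $\ell$, forcing $\beta_j=0$ for all $j$, i.e. $\beta=\zm$. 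The second assertion is proved by the symmetric argument, exchanging the roles of the two edges: a relation $z^{\gamma_1}-x^{\alpha}y^{\beta}z^{\gamma_2}\in I_{\Gamma}$ puts a nonnegative combination of the $(a_i,b_i)$ and $(c_j,d_j)$ onto the line spanned by the edge $\{(e_k,f_k)\}$, and the same half-plane argument kills $\alpha$ and $\beta$.

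The one point that needs a little care — and the step I expect to be the main (modest) obstacle — is justifying that a nonnegative combination of vectors in a closed half-plane $H$ with boundary line $\ell$ through the origin lands on $\ell$ only when each contributing vector lies on $\ell$. This is where strict convexity of $\sd$ is essential: $\ell$ is the linear span of one edge, $\sd\subset H$, and any generator not on $\ell$ lies in the \emph{open} half-plane; a sum of such a vector with other vectors of $H$ has strictly positive ``height'' over $\ell$ and so cannot be on $\ell$. One should also note that $(a_i,b_i)$ and $(e_k,f_k)$ cannot simultaneously lie on $\ell$, again by $\dim\sd=2$, so the roles of the two edges are genuinely distinct and the dichotomy is clean. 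Everything else is bookkeeping with $\pi_{\Gamma_0}$.
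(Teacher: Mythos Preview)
Your proof is correct and follows essentially the same approach as the paper: translate the binomial relation via $\pi_{\Gamma_0}$ into an equality of lattice points, observe that the left-hand side lies on the edge generated by the $(a_i,b_i)$, and conclude $\beta=\zm$, $\gamma=\zn$. The paper's argument is terse and simply asserts this conclusion from the edge condition; your half-plane/strict convexity justification spells out exactly the step the paper leaves to the reader.
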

\begin{proof}
It is known that $x^{\alpha_1}-x^{\alpha_2}y^{\beta}z^{\gamma}\in I_{\Gamma}$ if and
only if $\pi_{\Gamma_0}(\alpha_1,\zm,\zn)=\pi_{\Gamma_0}(\alpha_2,\beta,\gamma)$, where $\pi_{\Gamma_0}$
is the homomorphism defined in (\ref{e. pi}). Since the subset $\{(a_i,b_i)|i=1,\ldots,l\}\subset\Gamma_0$ generates one 
of the edges of $\sd$, and by hypothesis 
$$\sum_{i=1}^{l} \alpha_{1i}(a_i,b_i)=\sum_{i=1}^{l}\alpha_{2i}(a_i,b_i)+\sum_{j=1}^{m}\beta_{j}(c_j,d_j)+\sum_{k=1}^{n}\gamma_{k}(e_k,f_k),$$
it follows that $\beta=\zm$ and $\gamma=\zn$. The other statement follows similarly.
\end{proof}

\begin{coro}\label{c. z-z}
Let $X\subset\C^{l+m+n}$ be the toric surface defined by $\Gamma$. Then $(\ul,\zm,\zn)\in X$
and $(\zl,\zm,\un)\in X$.
\end{coro}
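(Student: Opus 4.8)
The plan is to read off membership in $X=\V(I_\Gamma)$ directly from the binomial description of $I_\Gamma$, using Lemma \ref{l. x-x} as the essential input. Since $I_\Gamma$ is generated by the binomials $x^\alpha-x^\beta$ with $\pi_{\Gamma_0}(\alpha)=\pi_{\Gamma_0}(\beta)$, a point of $\C^{l+m+n}$ lies on $X$ if and only if every such binomial vanishes at it, so it suffices to check this at $(\ul,\zm,\zn)$ and $(\zl,\zm,\un)$. I would begin with the elementary observation that a monomial $x^{a}y^{b}z^{c}$ (with $a\in\N^l$, $b\in\N^m$, $c\in\N^n$) takes the value $1$ at $(\ul,\zm,\zn)$ when $b=\zm$ and $c=\zn$, and the value $0$ otherwise. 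Hence a binomial generator $x^{a}y^{b}z^{c}-x^{a'}y^{b'}z^{c'}$ of $I_\Gamma$ automatically vanishes at $(\ul,\zm,\zn)$ unless exactly one of its two monomials is a pure power of the variables $x_1,\ldots,x_l$; that is, the only way the statement could fail at $(\ul,\zm,\zn)$ would be for $I_\Gamma$ to contain a binomial $x^{\alpha_1}-x^{\alpha_2}y^{\beta}z^{\gamma}$ with $(\beta,\gamma)\neq(\zm,\zn)$.

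Now Lemma \ref{l. x-x} says precisely that any such binomial in $I_\Gamma$ forces $\beta=\zm$ and $\gamma=\zn$, so the bad case cannot occur and every binomial generator of $I_\Gamma$ vanishes at $(\ul,\zm,\zn)$; therefore $(\ul,\zm,\zn)\in X$. The membership $(\zl,\zm,\un)\in X$ follows by the identical argument with the roles of the $x$- and $z$-variables interchanged, invoking the symmetric half of Lemma \ref{l. x-x} concerning binomials of the form $z^{\gamma_1}-x^{\alpha}y^{\beta}z^{\gamma_2}$.

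I do not expect a genuine obstacle here: all the substantive content is packaged in Lemma \ref{l. x-x}, which is already available, together with the trivial evaluation of monomials at a point whose coordinates are all $0$ or all $1$ on each block. The only point requiring a little care is the bookkeeping when matching a general binomial $x^\alpha-x^\beta\in I_\Gamma$ to the exact shape needed to apply the lemma, i.e. tracking which blocks of the exponent vectors are permitted to be nonzero. (One could alternatively give a more geometric proof, exhibiting $(\ul,\zm,\zn)$ and $(\zl,\zm,\un)$ as limits of points of the torus $(\C^*)^2\subset X$ along suitable one-parameter subgroups associated to the rays of $\sigma$, but the binomial argument above is shorter given what has already been established.)
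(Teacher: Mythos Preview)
Your proof is correct and follows essentially the same route as the paper: evaluate binomial generators of $I_\Gamma$ at the two candidate points and invoke Lemma~\ref{l. x-x} to rule out the one case where a generator could fail to vanish. The paper's version additionally notes that $\cero\in X$ forces both exponent vectors of any binomial generator to be nonzero, but your evaluation argument handles that case without needing this observation, so there is no gap.
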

\begin{proof}
Since $\cero\in X$, for every binomial $x^{\alpha}y^{\beta}z^{\gamma}-x^{\alpha'}y^{\beta'}z^{\gamma'}\in\IG$,
we must have $(\alpha,\beta,\gamma)\neq(\zl,\zm,\zn)$ and $(\alpha',\beta',\gamma')\neq(\zl,\zm,\zn)$.
On the other hand, $\IG$ is generated by binomials by definition. The corollary now follows from the lemma.
\end{proof}


\subsection{Orbits on a toric surface}

Let us call $\tau_1$ and $\tau_2$ the rays of $\sigma$ (see figure \ref{f. sigma dual}). 

\begin{figure}[ht]
\begin{center}
\includegraphics{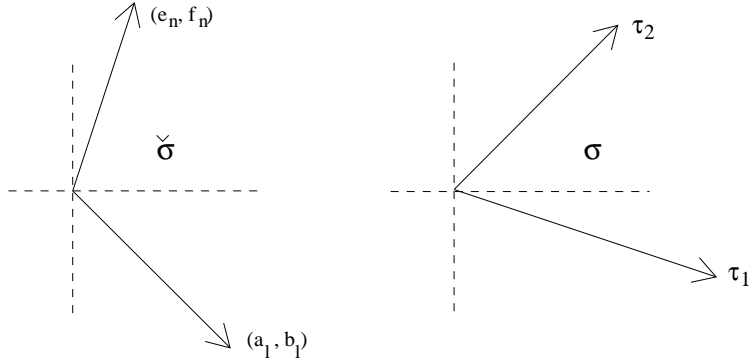}
\caption{The cone $\sd$ and its dual $\sigma$.\label{f. sigma dual}}
\end{center}
\end{figure}

The torus $T\subset X$ is given by $X\cap(\C^*)^{l+m+n}$. In addition, the action of $T$ on $X$ is given by 
coordinatewise multiplication (see \cite[Lemma 13.4]{St}). On the other hand, it is known that there is a bijective correspondence 
between the orbits of the action on $X$ and the faces of $\sigma$ (see \cite[Section 3.2]{CLS} for the normal case, 
or \cite[Proposition 15]{GT} for the general case). In particular, there are four such orbits. Using corollary \ref{c. z-z}, we
can describe them as follows:
\begin{align}\label{e. orbits}
\Oo_0\simeq & (\C^*)^2  \mbox{ (the orbit corresponding to the face }\{0\}),\notag\\
\Oo_{\sigma}=&\{\cero\} \mbox{ (the orbit corresponding to the face }\sigma),\notag\\
\Oo_1=&T\cdot(\zl,\zm,\un)=\{(\zl,\zm,z_1,\ldots,z_n)\in X|z_k\neq0\mbox{ for all }k\} \notag\\
&\mbox{ (the orbit corresponding to the face }\tau_1),\notag\\
\Oo_2=&T\cdot(\ul,\zm,\zn)=\{(x_1,\ldots,x_l,\zm,\zn)\in X|x_i\neq0\mbox{ for all }i\}\notag\\
&\mbox{ (the orbit corresponding to the face }\tau_2).\notag
\end{align}

\begin{lem}\label{l. monomial orbits}
Let $X\subset\C^{l+m+n}$ be the toric surface defined by $\Gamma$, $J\subsetneq\C[X]$ a non-zero monomial ideal. 
Then $\V(J)\subsetneq X$ is a union of closures of orbits of the action.
\end{lem}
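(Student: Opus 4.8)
The plan is to use the orbit decomposition $X = \Oo_0 \sqcup \Oo_1 \sqcup \Oo_2 \sqcup \{\cero\}$ together with the torus action. Since $J$ is a monomial ideal, $\V(J)$ is stable under the $T$-action (multiplying coordinates by units does not change whether a monomial vanishes), so $\V(J)$ is a union of $T$-orbits. Thus it suffices to show $\V(J)$ is closed — which it is, being a zero locus — and that a union of orbits that is closed is a union of orbit closures; this is immediate since the closure of any orbit contained in a closed set is again contained in it. So the real content is: (a) $\V(J) \neq X$, and (b) $\V(J) \neq \emptyset$ is possible but in any case $\V(J)$ is a proper, nonempty-or-empty union of orbit closures. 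Let me restructure.

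First I would record that $\V(J)$ is $T$-invariant: if $p\in\V(J)$ and $t\in T$, then for any monomial $x^{\alpha}y^{\beta}z^{\gamma}\in J$ we have $(t\cdot p)^{\alpha,\beta,\gamma} = t^{\alpha,\beta,\gamma}\, p^{\alpha,\beta,\gamma} = 0$, so $t\cdot p\in\V(J)$. Hence $\V(J)$ is a union of orbits. Next, since $\V(J)$ is Zariski closed and contains, for each orbit $\Oo$ it meets, the whole orbit, it also contains $\overline{\Oo}$; therefore $\V(J)$ is a union of orbit closures, namely $\V(J)=\bigcup_{\Oo\subset\V(J)}\overline{\Oo}$. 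Finally, $\V(J)\subsetneq X$ because $J\neq 0$: pick a nonzero monomial $x^{\alpha}y^{\beta}z^{\gamma}\in J$; since $\IG$ is prime and contains no monomials (every generator is a difference of two distinct monomials, and $X$ is irreducible with a dense torus), this monomial does not vanish on the dense torus $\Oo_0\simeq(\C^*)^2$, so $\Oo_0\not\subset\V(J)$.

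The main point requiring a little care is the claim that a nonzero monomial of $\C[X]$ does not vanish identically on $X$, i.e. that $\C[X]$ has no monomial zero-divisors supported away from the relations — equivalently that $x^{\alpha}y^{\beta}z^{\gamma}$ is nonzero in $\C[X]$ means it is nonzero at a generic point of the torus. This follows because $\C[X]=\C[x,y,z]/\IG$ and $\IG$ is the kernel of the map to $\C[t_1^{\pm},t_2^{\pm}]$ sending each coordinate to the Laurent monomial given by its exponent vector in $\Gamma_0$; a monomial maps to a nonzero Laurent monomial, which is a unit, hence is nonzero on $\Oo_0$. So the generator of $J$ we chose witnesses $\Oo_0\not\subset\V(J)$, giving $\V(J)\subsetneq X$.

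I do not expect any serious obstacle here; the only thing to be mildly careful about is phrasing the step "closed $+$ union of orbits $\Rightarrow$ union of orbit closures," and making sure the $T$-invariance computation is stated for monomials in $\C[X]$ (which are the images of monomials in $\C[x_1,\dots,x_l,y_1,\dots,y_m,z_1,\dots,z_n]$). One could also simply list, using the explicit description of the four orbits in \eqref{e. orbits}, which orbit closures can occur: $\overline{\Oo_0}=X$ (excluded), $\overline{\Oo_1}=\Oo_1\cup\{\cero\}$, $\overline{\Oo_2}=\Oo_2\cup\{\cero\}$, and $\overline{\Oo_{\sigma}}=\{\cero\}$, so $\V(J)$ is one of $\emptyset$, $\{\cero\}$, $\overline{\Oo_1}$, $\overline{\Oo_2}$, or $\overline{\Oo_1}\cup\overline{\Oo_2}$ — but that finer enumeration is not needed for the statement as written.
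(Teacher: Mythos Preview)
Your proof is correct and follows the same approach as the paper: the paper's proof is the single sentence ``This is clear since the zero set of a monomial ideal is invariant under the action of the torus,'' and your argument is a fleshed-out version of exactly that idea, supplying the $T$-invariance computation, the ``closed $+$ union of orbits $\Rightarrow$ union of orbit closures'' step, and the reason $\V(J)\subsetneq X$. The extra care you take (especially the remark that monomials map to units in the Laurent ring, hence do not vanish on $\Oo_0$) is not in the paper but is a welcome clarification.
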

\begin{proof}
This is clear since the zero set of a monomial ideal is invariant under the action of the torus.
\end{proof}

\section{The zero locus of $\J$}

Let us start by preparing the notation that we will use throughout this section.

Let $\Gamma\subset\Z^2$ be as in the beginning of section \ref{s. gamma} and let $X\subset\C^{l+m+n}$ be the toric surface
defined by $\Gamma$. Consider a set of relations among the generators of $\Gamma$:
\begin{align}
r_{\lambda}: \sum_{i=1}^l A_{\lambda i}(a_i,b_i)&+\sum_{j=1}^m B_{\lambda j}(c_j,d_j)+\sum_{k=1}^n C_{\lambda k}(e_k,f_k)=\notag\\
&\sum_{i=1}^l A'_{\lambda i}(a_i,b_i)+\sum_{j=1}^m B'_{\lambda j}(c_j,d_j)+\sum_{k=1}^n C'_{\lambda k}(e_k,f_k),\notag
\end{align}
where $\lambda=1,\ldots,s$, and $A_{\lambda i},B_{\lambda j},C_{\lambda k}\in\N$. Denote by $A_{\lambda}=(A_{\lambda 1},\ldots,A_{\lambda l})$ 
and similarly for $B_{\lambda}$, $C_{\lambda}$, $A'_{\lambda}$, $B'_{\lambda}$, $C'_{\lambda}$.
We assume that $\{r_{\lambda}\}$ is a generating family of relations.


\begin{rem}\label{r. minimal}
Notice that no $(A_{\lambda},B_{\lambda},C_{\lambda})$ is of the form $(0,\ldots,0,1,0,\ldots,0)$ and similarly for 
$(A'_{\lambda},B'_{\lambda},C'_{\lambda})$. This is true because $\Gamma_0$ is a minimal set of generators of $\Gamma$.
\end{rem}

\begin{rem}\label{r. rank}
Let $f_{\lambda}=x^{A_{\lambda}}y^{B_{\lambda}}z^{C_{\lambda}}-x^{A'_{\lambda}}y^{B'_{\lambda}}z^{C'_{\lambda}}$, 
$\lambda=1,\ldots,s$. Since $\{r_{\lambda}\}$ is a generating family of relations, $I_{\Gamma}=\langle f_1,\ldots,f_s \rangle\subset\C[x,y,z]$. 
Let $r=l+m+n-2$. After renaming binomials if necessary, 
we may assume that $f_1,\ldots,f_r$ are such that $\rk(\Jc(f_1,\ldots,f_r))=r$.
\end{rem}

\begin{nota}
From now on, $\J\subset\C[x,y,z]/\IG$ denotes the ideal generated by all $r\times r$ minors of $\Jc(f_1,\ldots,f_r)$.
\end{nota}

Using the notation of remark \ref{r. rank}, we can now look for an ideal defining the Nash blowup of a toric surface 
using theorem \ref{t. ideal Nash}. To that end we need to compute certain minors of the Jacobian matrix of
$f_1,\ldots,f_r$. It turns out that those minors have a combinatorial description which is explained in the following lemma
(in \cite[Section 2, Lemme 2]{GS-1} and \cite[Proposition 60]{GT}, the lemma is proved for affine toric varieties of 
any dimension).

\begin{lem}\label{l. minors}
Let $K_1\subset\{1,\ldots,l\}$, $K_2\subset\{1,\ldots,m\}$, $K_3\subset\{1,\ldots,n\}$ and $K=K_1\sqcup K_2\sqcup K_3$.
Assume that $|K|=2$. Let $M$ be the $(r\times r)$-submatrix of 
$\Jc(f_1,\ldots,f_r)$ formed by all its rows and all its columns except the two columns corresponding to the variables
associated to $K$. Then $\det(M)\mod\IG$ equals 
$$\det(R_K)\cdot \frac{x^{A_1+\cdots+A_r}y^{B_1+\cdots+B_r}z^{C_1+\cdots+C_r}}
{x_1\cdots x_l y_1\cdots y_m z_1\cdots z_n}\cdot\prod_{i\in K_1}x_i\prod_{j\in K_2}y_j\prod_{k\in K_3}z_k,$$
where $R_K$ is the $(r\times r)$-submatrix of $(A_{\lambda i}-A'_{\lambda i}\mbox{ }B_{\lambda j}-B'_{\lambda j}
\mbox{ }C_{\lambda k}-C'_{\lambda k}),$ $\lambda=1,\ldots,r$, $i=1,\ldots,l$, $j=1,\ldots,m$, $k=1,\ldots,n$, formed by 
all its rows and all its columns except the two corresponding to the variables associated to $K$.
In particular, $\J\subset\C[x,y,z]/\IG$ is a monomial ideal.
\end{lem}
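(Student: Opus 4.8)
The plan is to compute the determinant of the submatrix $M$ by exploiting the special structure of the Jacobian of a binomial. Recall that for $f_\lambda = x^{A_\lambda}y^{B_\lambda}z^{C_\lambda} - x^{A'_\lambda}y^{B'_\lambda}z^{C'_\lambda}$, the partial derivative with respect to, say, the variable $x_i$ is
$$
\frac{\partial f_\lambda}{\partial x_i} = A_{\lambda i}\,\frac{x^{A_\lambda}y^{B_\lambda}z^{C_\lambda}}{x_i} - A'_{\lambda i}\,\frac{x^{A'_\lambda}y^{B'_\lambda}z^{C'_\lambda}}{x_i},
$$
and similarly for the $y_j$ and $z_k$ variables. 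The first observation is that modulo $\IG$ the two monomials $x^{A_\lambda}y^{B_\lambda}z^{C_\lambda}$ and $x^{A'_\lambda}y^{B'_\lambda}z^{C'_\lambda}$ become equal (that is precisely the relation $r_\lambda$), so each entry of $\Jc(f_1,\ldots,f_r)\bmod\IG$ in row $\lambda$ is $(A_{\lambda i}-A'_{\lambda i})$ (or the corresponding $B$- or $C$-difference) times the common monomial divided by the relevant variable. Thus, writing $m_\lambda := x^{A_\lambda}y^{B_\lambda}z^{C_\lambda}\bmod\IG$, row $\lambda$ of $M$ factors as $m_\lambda$ times a row whose entries are $(A_{\lambda i}-A'_{\lambda i})/x_i$, etc.

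Next I would pull these factors out of the determinant by multilinearity. Pulling $m_\lambda$ out of row $\lambda$ for each $\lambda=1,\ldots,r$ contributes $\prod_{\lambda=1}^r m_\lambda = x^{A_1+\cdots+A_r}y^{B_1+\cdots+B_r}z^{C_1+\cdots+C_r}\bmod\IG$. Then, for each column retained in $M$ — i.e. each variable among $x_1,\ldots,x_l,y_1,\ldots,y_m,z_1,\ldots,z_n$ \emph{not} indexed by $K$ — pull out the common factor $1/(\text{that variable})$ from that column; this contributes $\bigl(\prod_{\text{all variables}} v\bigr)^{-1}\cdot\prod_{i\in K_1}x_i\prod_{j\in K_2}y_j\prod_{k\in K_3}z_k$, since the denominator over all variables is $x_1\cdots x_l\,y_1\cdots y_m\,z_1\cdots z_n$ and we must put back the factors corresponding to the two deleted columns. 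What remains after these extractions is exactly $\det(R_K)$, the $(r\times r)$-minor of the integer difference matrix obtained by deleting the two columns associated to $K$. Assembling the three pieces gives the claimed formula. Finally, since $\det(R_K)$ is a constant, each minor is (a scalar times) a single monomial in $\C[x,y,z]/\IG$, so $\J$ is generated by monomials, proving the last assertion.

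The main obstacle is bookkeeping rather than anything conceptual: one must be careful that the reduction modulo $\IG$ is applied correctly (the monomials are identified only after reduction, so strictly one should argue at the level of $\Jc(f_1,\ldots,f_r)$ first and then reduce the resulting expression, or observe that the determinant is multilinear and the reduction is a ring homomorphism), and one must track precisely which two variables/columns are removed so that the factor $\prod_{i\in K_1}x_i\prod_{j\in K_2}y_j\prod_{k\in K_3}z_k$ comes out with the correct indices and exponents. There is also a minor point to check: that the monomial $x^{A_1+\cdots+A_r}y^{B_1+\cdots+B_r}z^{C_1+\cdots+C_r}/(x_1\cdots z_n)\cdot\prod_{i\in K_1}x_i\cdots$ actually has non-negative exponents, i.e. is a genuine monomial in $\C[x,y,z]$ and not a Laurent monomial — but this follows because the original $f_\lambda$ are honest polynomials and the relevant partial derivatives involved exist in $\C[x,y,z]$, so no cancellation produces negative powers. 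Since the analogous statement for toric varieties of arbitrary dimension is already in \cite{GS-1,GT}, I would either cite that directly or reproduce the short computation above in the surface case.
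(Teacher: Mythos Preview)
Your proposal is correct and matches the paper's treatment: the paper does not prove this lemma at all but simply refers to \cite[Section 2, Lemme 2]{GS-1} and \cite[Proposition 60]{GT}, where the general toric case is established by exactly the row/column factorisation you outline (factor $m_\lambda$ from each row modulo $\IG$, then $1/v$ from each surviving column, leaving the integer minor $\det(R_K)$). Your final remark---that one may just cite \cite{GS-1,GT}---is literally what the paper does; the only caveat worth tightening in your sketch is that the factorisation is cleanest when carried out in the Laurent ring $\C[x^{\pm},y^{\pm},z^{\pm}]/\IG$ (into which $\C[x,y,z]/\IG$ embeds since $\IG$ is prime and contains no monomials), after which the equality descends to $\C[x,y,z]/\IG$ because the left-hand side is already there.
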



\begin{coro}\label{c. union orbits}
Let $X$ be the toric surface defined by $\Gamma$. 
Then $\V(\J)\subsetneq X$ is a union of closures of orbits of the action. Similarly, $\Si$ is also a union
of closures of orbits of the action.
\end{coro}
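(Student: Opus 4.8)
The plan is to leverage the two structural facts already in place: by Lemma~\ref{l. minors}, $\J$ is a monomial ideal in $\C[x,y,z]/\IG$, and by hypothesis (vii) together with Remark~\ref{r. inclusion} it is a proper ideal whose zero locus contains $\cero$, hence is a nonempty proper subvariety of $X$. First I would invoke Lemma~\ref{l. monomial orbits} directly: since $\J\subsetneq\C[X]$ is a nonzero monomial ideal, its zero set $\V(\J)$ is automatically invariant under the torus action, and therefore is a union of $T$-orbits; being closed, it is a union of closures of orbits. The list of orbits in \eqref{e. orbits} shows there are only four of them, so $\V(\J)$ is one of finitely many explicit possibilities; the properness $\V(\J)\subsetneq X$ follows because $\J$ is a nonzero ideal in an integral domain, so $\V(\J)$ cannot be all of $X$.

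For the second assertion, that $\Si$ is also a union of closures of orbits, the cleanest route is to observe that the singular locus of any variety with a group action is invariant under that action: if $g\in T$ acts as an automorphism of $X$ then $g$ carries singular points to singular points and non-singular points to non-singular points, so $g\cdot\Si=\Si$. Since $\Si$ is moreover Zariski closed, it is a union of orbit closures. Alternatively one can argue that $\Si=\V(\J)$ would already give it for free via the first part, but that equality is exactly what the paper is trying to decide, so the action-invariance argument is the honest one. A further remark worth including: $\Si\subsetneq X$ because a toric variety is generically smooth (it contains the dense torus $\Oo_0\simeq(\C^*)^2$, which is smooth), so again $\Si$ omits at least the open orbit.

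The only subtlety — and the step I expect to require a word of care rather than real difficulty — is confirming that $\J$ is genuinely nonzero, so that Lemma~\ref{l. monomial orbits} applies; this is where Remark~\ref{r. rank} enters, since $f_1,\dots,f_r$ are chosen precisely so that $\rk(\Jc(f_1,\dots,f_r))=r$, which means at least one $r\times r$ minor is a nonzero element of $\C[X]$, hence $\J\neq 0$. Granting that, the proof is essentially an immediate assembly of Lemma~\ref{l. minors}, Lemma~\ref{l. monomial orbits}, the orbit description \eqref{e. orbits}, and the general principle that singular loci are stable under automorphisms; no computation beyond what is already recorded is needed.
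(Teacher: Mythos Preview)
Your argument for $\V(\J)$ matches the paper's: invoke Lemma~\ref{l. minors} to see that $\J$ is monomial (and nonzero by Remark~\ref{r. rank}), then apply Lemma~\ref{l. monomial orbits}.

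For $\Si$ you take a genuinely different route. You appeal to the general fact that the singular locus of a variety is invariant under any automorphism, hence under the $T$-action, and conclude that $\Si$, being closed and $T$-stable, is a union of orbit closures. The paper instead stays within its monomial-ideal machinery: it observes that by applying Lemma~\ref{l. minors} to every choice of $r$ binomials among $f_1,\ldots,f_s$, the ideal generated by all $r\times r$ minors of the full Jacobian $\Jc(f_1,\ldots,f_s)$ is again monomial, and then invokes Lemma~\ref{l. monomial orbits} once more. Your argument is cleaner and more conceptual, and works for any variety with a group action; the paper's approach has the small side benefit of exhibiting the Jacobian ideal cutting out $\Si$ explicitly as a monomial ideal, which is consonant with the computations that follow. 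Both are correct.
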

\begin{proof}
By lemma \ref{l. minors}, $\J$ is a monomial ideal. The statement on $\V(\J)$ follows from lemma \ref{l. monomial orbits}.
On the other hand, by repeteadly applying the lemma for all choices of $r$ binomials among $f_1,\ldots,f_s$,
we obtain that the ideal generated by all $r\times r$ minors of $\Jc(f_1,\ldots,f_s)$ is also monomial. 
Thus $\Si$ is a union of closures of orbits by lemma \ref{l. monomial orbits}.
\end{proof}


We are looking for a choice of binomials in $\IG$ 
satisfying the conditions of theorem \ref{t. ideal Nash} and such that $\Si=\V(\J)$. We consider this problem 
according to the dimension of the singular locus. In the next two sections we prove our main theorem.

\begin{teo}\label{t.  main}
Let $X\subset\C^{l+m+n}$ be the toric surface defined by $\Gamma$ and assume that $X$ is not a complete intersection.
\begin{itemize}
\item[(i)] If $\dim\Si=1$, there is a choice of $r$ binomials among $f_1,\ldots,f_s$ such that
the corresponding ideal $\J$ satisfies $\Si=\V(\J)$.
\item[(ii)] If $\dim\Si=0$, there are no $r$ binomials among $f_1,\ldots,f_s$ such that 
the corresponding 
ideal $\J$ satisfies $\Si=\V(\J)$.
\end{itemize}
\end{teo}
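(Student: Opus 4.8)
The plan is to treat the two cases separately, exploiting throughout that, by Corollary~\ref{c. union orbits}, both $\Si$ and $\V(\J)$ are unions of closures of orbits, and by the explicit list of orbits in \eqref{e. orbits} the only candidates are $\{\cero\}$, $\overline{\Oo_1}=\{(\zl,\zm,z_1,\ldots,z_n)\in X\}$, $\overline{\Oo_2}=\{(x_1,\ldots,x_l,\zm,\zn)\in X\}$, and $X$ itself. Since $X$ is irreducible and $\V(\J)\subsetneq X$, the possible values of $\V(\J)$ are $\{\cero\}$, $\overline{\Oo_1}$, $\overline{\Oo_2}$, or $\overline{\Oo_1}\cup\overline{\Oo_2}$; and $\cero\in\Si$ always. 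So the combinatorial content is to decide, for a given choice of $r$ binomials, which of the orbit closures $\overline{\Oo_1}$, $\overline{\Oo_2}$ is contained in $\V(\J)$, and to compare with $\Si$. The key tool is Lemma~\ref{l. minors}: a minor indexed by $K=K_1\sqcup K_2\sqcup K_3$ vanishes on a given orbit iff the monomial
$$\frac{x^{A_1+\cdots+A_r}y^{B_1+\cdots+B_r}z^{C_1+\cdots+C_r}}{x_1\cdots x_l\,y_1\cdots y_m\,z_1\cdots z_n}\cdot\prod_{i\in K_1}x_i\prod_{j\in K_2}y_j\prod_{k\in K_3}z_k$$
does (when $\det(R_K)\neq 0$), and the vanishing of a monomial on $\overline{\Oo_1}$ (resp.\ $\overline{\Oo_2}$) is just the condition that it involve some $x_i$ or $y_j$ (resp.\ some $y_j$ or $z_k$). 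Thus for each orbit one gets a clean divisibility/exponent criterion on the multidegree $(A,B,C):=(\sum A_\lambda,\sum B_\lambda,\sum C_\lambda)$ of the chosen binomials.

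For part~(i), when $\dim\Si=1$ one of the one-dimensional orbit closures, say $\overline{\Oo_2}$, lies in $\Si$; geometrically this means the surface is singular along the $x$-axis stratum, which forces many of the relations $r_\lambda$ to be ``$x$-heavy''. The strategy is: first show that $\dim\Si=1$ implies $\Si=\overline{\Oo_2}$ (or $\overline{\Oo_1}$, symmetrically) — it cannot be all of $X$, and if it contained both $\overline{\Oo_1}$ and $\overline{\Oo_2}$ one argues, using Corollary~\ref{c. z-z} together with the fact that $\C^*\times\{0\}^{l+m+n-1}$-type smooth points exist along each ray, that $X$ would be singular in codimension $\geq$ something incompatible with being a surface that is generically smooth only in the expected way; more concretely, being singular along $\overline{\Oo_1}\cup\overline{\Oo_2}$ is essentially the complete-intersection/hypersurface dichotomy of \cite{Ri} in disguise and can be excluded under the non-complete-intersection hypothesis combined with $\dim\Si=1$ — this needs to be pinned down carefully. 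Then, using Corollary~\ref{c. equations}, exhibit an explicit choice of $r$ binomials: take binomials of the form $x_i^{\alpha_{i,i+1}}-x_{i+1}^{\alpha'_{i,i+1}}$ along one edge, $z_k^{\gamma_{k,k+1}}-z_{k+1}^{\gamma'_{k,k+1}}$ along the other, and the mixed binomials $y_j^{\beta_j}-x_1^{\alpha_j}z_1^{\gamma_j}$ for the interior generators, plus whatever extra relation is needed to reach rank $r$; then compute the multidegree $(A,B,C)$ of this family and check via the Lemma~\ref{l. minors} criterion that every nonzero minor is divisible by some $x_i$ (so $\V(\J)\subseteq\overline{\Oo_2}$) while some minor is a monomial not vanishing identically on $\overline{\Oo_2}\setminus\{\cero\}$ fails — i.e.\ one must also verify the reverse inclusion $\overline{\Oo_2}\subseteq\V(\J)$, which holds because $\overline{\Oo_2}\subseteq\Si\subseteq\V(\J)$ by Remark~\ref{r. inclusion}. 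So really the only thing to check for the chosen family is $\V(\J)\subseteq\overline{\Oo_2}$, equivalently that no minor is supported purely on $z$-variables; this should follow from the presence of the mixed binomials and the structure of $R_K$.

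For part~(ii), when $\dim\Si=0$ we have $\Si=\{\cero\}$, and we must show that \emph{every} choice of $r$ binomials among $f_1,\ldots,f_s$ yields $\V(\J)\supsetneq\{\cero\}$, i.e.\ $\V(\J)$ contains $\overline{\Oo_1}$ or $\overline{\Oo_2}$. By the Lemma~\ref{l. minors} criterion, $\overline{\Oo_2}\not\subseteq\V(\J)$ means some nonzero minor is a monomial in the $z$'s alone, which forces the corresponding submatrix $R_K$ (with $K\subseteq\{1,\ldots,n\}$, $|K|=2$, i.e.\ $K_1=K_2=\emptyset$) to have a full-rank $r\times r$ minor, and simultaneously forces the total multidegree to satisfy $\sum A_\lambda=\ul$... — but $\sum_\lambda A_\lambda$ having all entries equal to $1$ with the $A_\lambda\in\N^l$ not equal to standard basis vectors (Remark~\ref{r. minimal}) is a strong constraint, and similarly $\overline{\Oo_1}\not\subseteq\V(\J)$ forces $\sum C_\lambda=\un$. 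The heart of part~(ii) is to show that these two multidegree constraints \emph{cannot hold simultaneously} together with rank-$r$-ness, using that $X$ is not a complete intersection (if it were a hypersurface, $r=1$ and the single relation could be tuned, but non-complete-intersection rules that out and in fact, by \cite{Ri} in the normal case and the cited non-normal examples, $\dim\Si=0$ with non-CI is a genuinely obstructed configuration). I expect \textbf{this simultaneous-impossibility argument to be the main obstacle}: one has to translate ``$\V(\J)=\{\cero\}$'' into the pair of equalities $\sum_\lambda A_\lambda=\ul$ and $\sum_\lambda C_\lambda=\un$ (or subtler inequalities coming from the $\det(R_K)=0$ cases, which are the really delicate ones — a minor can fail the orbit criterion either because its monomial has the wrong support \emph{or} because $\det(R_K)=0$, and ruling out a conspiracy of rank-drops across all $K$ is exactly where careful linear algebra over $\Z$ is needed), and then derive a contradiction with the rank-$r$ hypothesis and non-complete-intersection. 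A clean way to organize this is to sum the defining relations: $\sum_\lambda A_\lambda=\ul$ would say $\sum_i(a_i,b_i)=\sum_\lambda\bigl(\text{stuff}\bigr)$ in a way that, combined with the edge-generation hypotheses (iv),(v) and minimality, over-determines $\Gamma_0$; making this precise, and separating the $\det R_K=0$ degeneracies, is the crux.
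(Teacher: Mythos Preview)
Your proposal contains two genuine errors in part~(i) and leaves the core of part~(ii) undone.

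In part~(i) you try to exclude the case $\Si=\overline{\Oo_1}\cup\overline{\Oo_2}$, but this case does occur for non-complete-intersection toric surfaces (see Example~\ref{e. two orbits}), so it cannot be argued away. It is in fact the trivial case: since $\Si\subseteq\V(\J)\subsetneq X$ and $\V(\J)$ is a proper union of orbit closures, $\V(\J)$ is forced to equal $\overline{\Oo_1}\cup\overline{\Oo_2}$ for \emph{any} choice of $r$ binomials. More seriously, in the remaining case (say $\Si=\overline{\Oo_2}$) your vanishing logic is inverted. ``Every nonzero minor is divisible by some $x_i$'' means every minor vanishes on $\overline{\Oo_1}=\{x=0\}$, hence $\overline{\Oo_1}\subseteq\V(\J)$, which is the opposite of $\V(\J)\subseteq\overline{\Oo_2}$. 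What you need is that \emph{some} minor is a pure $z$-monomial, so that it does not vanish at $(\zl,\zm,\un)\in\Oo_1$. (The same inversion recurs in your part~(ii): $\overline{\Oo_2}\not\subseteq\V(\J)$ is equivalent to some minor being a pure $x$-monomial, not a pure $z$-monomial.) The paper bypasses your explicit construction entirely: assuming $\Si=\overline{\Oo_1}$, the point $(\ul,\zm,\zn)$ is smooth, so some $r\times r$ minor of the \emph{full} Jacobian $\Jc(f_1,\ldots,f_s)$ is a pure $x$-monomial (Lemma~\ref{l. M exists}); the $r$ rows producing that minor give the desired $\J$ immediately (Proposition~\ref{p. dim 1}).

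For part~(ii) your translation is correct in spirit---the simultaneous existence of a pure $x$-minor and a pure $z$-minor, via Lemma~\ref{l. minors} and Lemma~\ref{l. x-x}, forces each column sum $\sum_\lambda A_{\lambda i}$, $\sum_\lambda B_{\lambda j}$, $\sum_\lambda C_{\lambda k}$ to lie in $\{0,1\}$---but you stop at calling this ``the crux''. The paper's argument (Proposition~\ref{p. dim 0}) is a finite case analysis on $|K_1|\in\{1,2\}$ together with the forced inclusions $K'_1\subseteq K_1$, $K_3\subseteq K'_3$, $K_2=K'_2$; in each subcase a pigeonhole count (at most $N$ entries equal to $1$ in the $r\times N$ exponent matrix, $r=N-2$ rows each needing at least two nonzero entries by Remark~\ref{r. minimal}) forces $N\leq 4$, and the residual $N=4$ case is eliminated by an explicit binomial manipulation showing $f_1=f_2$, contradicting $\rk\Jc(f_1,f_2)=2$. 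Your worry about ``$\det R_K=0$ conspiracies'' is misplaced: the argument uses only the two specific minors assumed nonzero, and needs no further rank control.
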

\begin{proof}
\begin{itemize}
\item[(i)] This is proposition \ref{p. dim 1} below.
\item[(ii)] This is corollary \ref{c. dim 0} below.
\end{itemize}
\end{proof}

\subsection{$\dim\Si=1$}\label{sub. dim 1}

In this section we show that under the assumption $\dim\Si=1$, we can always choose binomials $f_1,\ldots,f_r\in\IG$ satisfying 
the conditions of theorem \ref{t. ideal Nash} and such that $\Si=\V(\J)$. By corollary \ref{c. union orbits} our assumption implies
\[\Si=
\left\{
\begin{array}{rll}
&\overline{\Oo_1}\cup\overline{\Oo_2},	\\						
&\overline{\Oo_1},            		\\				
&\overline{\Oo_2}.										
\end{array}
\right.
\]
If $\Si=\overline{\Oo_1}\cup\overline{\Oo_2}$ then we obtain directly that $\V(\J)=\Si$ for any choice of $\J$ since 
$\Si\subseteq\VJ$ and $\VJ$ is also a union of closures of orbits. So we assume that $\Si=\overline{\Oo_1}=
\overline{\{(\zl,\zm,z_1,\ldots,z_n)\in X\}}$ (case $\Si=\overline{\Oo_2}$ follows similarly). 


\begin{lem}\label{l. M exists}
There is a $r\times r$ minor of $\Jc(f_1 \ldots f_s)$ equal to $x_1^{\alpha_1}\cdots x_l^{\alpha_l}$ for some 
$(\alpha_1,\ldots,\alpha_l)\in\N^l\setminus\{\zl\}$.
\end{lem}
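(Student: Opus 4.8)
The plan is to produce the desired minor explicitly by choosing, among the binomials $f_1,\ldots,f_s$, a subfamily of size $r$ whose Jacobian submatrix (obtained by deleting the two columns indexed by a pair $K$ with $K_1=K$, $K_2=K_3=\emptyset$) has determinant a monomial in the $x$-variables only. By lemma \ref{l. minors}, the minor associated to such a $K\subset\{1,\ldots,l\}$ with $|K|=2$ is
\[
\det(R_K)\cdot\frac{x^{A_1+\cdots+A_r}y^{B_1+\cdots+B_r}z^{C_1+\cdots+C_r}}{x_1\cdots x_l\,y_1\cdots y_m\,z_1\cdots z_n}\cdot\prod_{i\in K}x_i,
\]
so this minor is a monomial purely in the $x_i$ precisely when the exponents of all $y_j$ and all $z_k$ cancel, i.e.\ when $B_1+\cdots+B_r=\um$ and $C_1+\cdots+C_r=\un$ (and $\det(R_K)\neq 0$). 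Hence the first, and main, task is to exhibit $r$ relations $r_\lambda$ among $f_1,\ldots,f_s$ whose $B$- and $C$-exponent vectors (counting both sides appropriately through the difference matrix, but for the monomial-in-$x$ conclusion it is the \emph{sums} $\sum B_\lambda$, $\sum C_\lambda$ of the left-hand exponents that matter) sum to $\um$ and $\un$ respectively, and whose associated $r\times r$ difference submatrix $R_K$ is nonsingular.

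The second task — actually the conceptual heart — is to justify that such relations exist, and here I would use the hypothesis $\Si=\overline{\Oo_1}$ together with the structure of $\IG$ from corollary \ref{c. equations}. The key point is that $\dim\Si=1$ with $\Si=\overline{\Oo_1}=\overline{\{(\zl,\zm,z_1,\ldots,z_n)\}}$ forces the toric surface to be nonsingular along $\Oo_2$ and along $\Oo_0$, which combinatorially restricts the semigroup: intuitively the edge generated by the $(e_k,f_k)$ is ``bad'' while the other directions are ``good.'' Concretely, from corollary \ref{c. equations}(1) we already have binomials $x_i^{\alpha_{i,i+1}}-x_{i+1}^{\alpha'_{i,i+1}}\in\IG$ for $i=1,\ldots,l-1$, and from corollary \ref{c. equations}(3) binomials $y_j^{\beta_j}-x_1^{\alpha_j}z_1^{\gamma_j}\in\IG$ for $j=1,\ldots,m$. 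I would take these $l-1+m$ binomials as part of the family and then supplement them with $r-(l-1+m)=n-1$ further binomials chosen to exhaust the $z$-variables (each contributing one $z_k$, $k=2,\ldots,n$), for instance coming from corollary \ref{c. equations}(2). One then checks that the left-hand exponent vectors of this choice satisfy $\sum_\lambda B_\lambda=\um$ (each $y_j$ appears in exactly one binomial, from part (3), with a power that contributes $1$ after the normalization — this requires a small argument that we may rescale, or rather that the relevant $\beta_j$ can be taken so the bookkeeping works, or alternatively that the divisibility in lemma \ref{l. minors} only needs each $y_j$ to appear with total degree exactly $1$ in the combined numerator, which is arranged by using exactly one relation per $y_j$) and likewise $\sum_\lambda C_\lambda=\un$. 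The cleanest route is to observe that what lemma \ref{l. minors} really demands is that the monomial $x^{A_1+\cdots+A_r}y^{B_1+\cdots+B_r}z^{C_1+\cdots+C_r}\big/(x_1\cdots x_l y_1\cdots y_m z_1\cdots z_n)\cdot\prod_{i\in K}x_i$ be a genuine monomial (no negative exponents) lying in $\C[x,y,z]/\IG$ and involving only $x$'s; after reducing mod $\IG$ using the binomials of corollary \ref{c. equations}, every $y_j$ and every $z_k$ can be rewritten in terms of $x_1,z_1$ and then of $x_1$ alone along $\Oo_1$'s complement, so one argues the reduced monomial is in $\C[x]$. I expect getting this reduction bookkeeping exactly right — ensuring nonnegativity of all exponents and the purely-$x$ conclusion simultaneously — to be the main obstacle.

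The third task is the genericity/rank condition: one must check both that the chosen $f_1,\ldots,f_r$ satisfy $\rk\Jc(f_1,\ldots,f_r)=r$ (so that they are a legitimate choice in theorem \ref{t. ideal Nash} and remark \ref{r. rank}) and that $\det(R_K)\neq 0$ for the particular pair $K\subset\{1,\ldots,l\}$ used. For the latter I would pick $K=\{1,2\}$ and note that $R_K$ is block-lower-triangular with respect to the grouping (the $l-1$ $x$-relations give an $(l-2)\times(l-2)$ nonsingular block on the columns $x_3,\ldots,x_l$ after deleting $x_1,x_2$; the $m$ $y$-relations contribute a diagonal block with entries $\pm\beta_j$ on the $y_j$ columns; the $n-1$ $z$-relations contribute a nonsingular block on the $z_2,\ldots,z_n$ columns), so its determinant is a nonzero product. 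The rank condition $\rk\Jc(f_1,\ldots,f_r)=r$ then follows from lemma \ref{l. minors} itself, since we have just exhibited a nonzero minor. Finally, once lemma \ref{l. M exists} is in hand, the monomial $x_1^{\alpha_1}\cdots x_l^{\alpha_l}$ with $(\alpha_1,\ldots,\alpha_l)\neq\zl$ vanishes on $\overline{\Oo_1}$ (where all $x_i=0$) but not on $\overline{\Oo_2}$ (which meets $\{x_i\neq 0\ \forall i\}$), which is exactly the separation property that the ensuing proposition \ref{p. dim 1} needs; but that deduction belongs to the next lemma, not to this one. So the proof of lemma \ref{l. M exists} stops at exhibiting the minor, and the only genuinely delicate point is the exponent bookkeeping in task two.
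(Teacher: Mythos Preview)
Your approach is constructive, but the gap you flag as ``the main obstacle'' is genuinely fatal for the specific family you propose. With the binomials from corollary~\ref{c. equations}, the condition $\sum_\lambda B_\lambda=\um$ cannot hold: each relation $y_j^{\beta_j}-x_1^{\alpha_j}z_1^{\gamma_j}$ must have $\beta_j\geq 2$, since $\beta_j=1$ would express $(c_j,d_j)$ as a nonnegative combination of $(a_1,b_1)$ and $(e_1,f_1)$, contradicting the minimality of $\Gamma_0$. Thus the $y_j$-exponent in the minor formula of lemma~\ref{l. minors} is $\beta_j-1\geq 1$, and by lemma~\ref{l. x-x} such a monomial is \emph{never} congruent modulo $\IG$ to a monomial in the $x$-variables alone, so no reduction can save you. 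Swapping the two sides of the binomial does not help either: writing it as $x_1^{\alpha_j}z_1^{\gamma_j}-y_j^{\beta_j}$ pushes the problem to $\sum_\lambda C_{\lambda,1}$, which then exceeds $1$. There are further loose ends: your binomials lie in $\IG$ but need not be among the fixed generators $f_1,\ldots,f_s$ for which the lemma is stated, and the choice $K=\{1,2\}\subset\{1,\ldots,l\}$ presupposes $l\geq 2$.

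The paper's proof sidesteps all of this with a three-line contradiction using the Jacobian criterion. Suppose every $r\times r$ minor of $\Jc(f_1,\ldots,f_s)$ involves some variable $y_j$ or $z_k$. Then every such minor vanishes at the point $(\ul,\zm,\zn)$, which lies on $X$ by corollary~\ref{c. z-z}; hence $(\ul,\zm,\zn)\in\Si$. But this point belongs to the orbit $\Oo_2$, which is disjoint from $\Si=\overline{\Oo_1}$ by hypothesis. That is the whole argument: no explicit minor is ever produced, and the hypothesis $\Si=\overline{\Oo_1}$ enters only through the non-singularity of $(\ul,\zm,\zn)$.
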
 
\begin{proof}
Suppose that every $r\times r$ minor of $\Jc(f_1 \ldots f_s)$ has some of the variables $y_j$ or $z_k$.
We want to contradict the fact $\Si=\overline{\Oo_1}$, so we need to 
find a point $(x_1,\ldots,x_l,\zm,\zn)\in\Si$, where $x_i\neq0$ for some $i$ (hence for all $i$ by corollary 
\ref{c. equations}). By corollary \ref{c. z-z}, $(\ul,\zm,\zn)\in X$ and the assumption on the minors implies
that $(\ul,\zm,\zn)\in\Si$. This contradiction implies that there must be 
a $r\times r$ minor of $\Jc(f_1,\ldots,f_s)$ of the form $x_1^{\alpha_1}\cdots x_l^{\alpha_l}$.
\end{proof}

\begin{pro}\label{p. dim 1}
Let $M$ be the submatrix of $\Jc(f_1\ldots f_s)$ formed by the $r$ rows having a minor of the form 
$x_1^{\alpha_1}\cdots x_l^{\alpha_l}$ for some $(\alpha_1,\ldots,\alpha_l)\in\N^l\setminus\{\zl\}$. 
Let $\J\subset\C[x,y,z]/\IG$ be the ideal generated by all $r\times r$ minors of $M$. Then $\VJ=\Si.$
\end{pro}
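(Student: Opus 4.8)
The plan is to show $\VJ=\Si$ by proving both inclusions, where by hypothesis $\Si=\overline{\Oo_1}$. The inclusion $\Si\subseteq\VJ$ is already given by Remark \ref{r. inclusion}, so the real content is $\VJ\subseteq\overline{\Oo_1}$. By Corollary \ref{c. union orbits}, $\VJ$ is a union of closures of orbits, and since $\J$ is a non-zero proper monomial ideal (Lemma \ref{l. minors}), $\VJ$ is a \emph{proper} subset of $X$; hence $\VJ$ is one of $\overline{\Oo_1}$, $\overline{\Oo_2}$, $\overline{\Oo_1}\cup\overline{\Oo_2}$, or $\{\cero\}$. Because $\Si\subseteq\VJ$ and $\Si=\overline{\Oo_1}$, the only possibilities for $\VJ$ are $\overline{\Oo_1}$ or $\overline{\Oo_1}\cup\overline{\Oo_2}$. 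So it suffices to rule out $\overline{\Oo_2}\subseteq\VJ$, i.e.\ to exhibit a point of $\Oo_2$ at which some generator of $\J$ does not vanish.

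The key step is the choice of rows made in the statement: by Lemma \ref{l. M exists} there is an $r\times r$ minor of $\Jc(f_1,\ldots,f_s)$ equal to a monomial $x_1^{\alpha_1}\cdots x_l^{\alpha_l}$ with $(\alpha_1,\ldots,\alpha_l)\neq\zl$, and $M$ is defined to be the $r\times r$ submatrix (all columns, those $r$ rows) realizing it. I would first check that $M$ indeed has rank $r$, so that the corresponding $f$'s satisfy the hypotheses of Theorem \ref{t. ideal Nash} and $\J$ genuinely defines the Nash blowup — this is immediate since a matrix possessing a nonzero $r\times r$ minor has rank $r$. Next, I would evaluate the generator $x_1^{\alpha_1}\cdots x_l^{\alpha_l}$ of $\J$ at the point $(\ul,\zm,\zn)\in\Oo_2$ (which lies in $X$ by Corollary \ref{c. z-z}): it equals $1\neq0$. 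Hence $(\ul,\zm,\zn)\notin\VJ$, so $\overline{\Oo_2}\not\subseteq\VJ$, and therefore $\VJ=\overline{\Oo_1}=\Si$. The symmetric case $\Si=\overline{\Oo_2}$ is handled identically after interchanging the roles of the $x$- and $z$-variables (using Lemma \ref{l. x-x} and Corollary \ref{c. z-z} for the point $(\zl,\zm,\un)$), and the case $\Si=\overline{\Oo_1}\cup\overline{\Oo_2}$ was already disposed of before the lemmas.

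The main obstacle — really the only subtle point — is making sure the monomial minor $x_1^{\alpha_1}\cdots x_l^{\alpha_l}$ survives the reduction modulo $\IG$, i.e.\ that it is still a \emph{generator} of $\J\subset\C[x,y,z]/\IG$ and not reduced to zero or to something with a $y$ or $z$ factor. Lemma \ref{l. minors} addresses exactly this: the minors of $\Jc(f_1,\ldots,f_r)$ computed $\mod\IG$ are the explicit monomials $\det(R_K)\cdot x^{A_1+\cdots+A_r}y^{B_1+\cdots+B_r}z^{C_1+\cdots+C_r}\cdot\big(\prod_{i\in K_1}x_i\prod_{j\in K_2}y_j\prod_{k\in K_3}z_k\big)/(x_1\cdots x_l y_1\cdots y_m z_1\cdots z_n)$, so the "minor equals $x_1^{\alpha_1}\cdots x_l^{\alpha_l}$" of Lemma \ref{l. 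M exists} is a genuine monomial generator of $\J$ in the quotient ring, and its nonvanishing at $(\ul,\zm,\zn)$ is literal. Thus the whole argument reduces to the orbit classification of $\VJ$ plus this one explicit evaluation, with the delicate bookkeeping already packaged into Lemmas \ref{l. minors}, \ref{l. M exists} and Corollary \ref{c. z-z}.
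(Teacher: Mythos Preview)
Your proposal is correct and follows essentially the same line as the paper: both arguments hinge on the single fact that the monomial $x_1^{\alpha_1}\cdots x_l^{\alpha_l}\in\J$ does not vanish on $\Oo_2$. The paper phrases the reverse inclusion pointwise (if $p\in\VJ$ then some $x_i=0$, hence all $x_i=0$ and all $y_j=0$ by Corollary~\ref{c. equations}, so $p\in\overline{\Oo_1}$), whereas you invoke the orbit decomposition of $\VJ$ and evaluate at the single point $(\ul,\zm,\zn)$; the content is the same. One small slip: you call $M$ an ``$r\times r$ submatrix (all columns, those $r$ rows)'', but $M$ has $r$ rows and $l+m+n=r+2$ columns --- this is just a wording glitch and does not affect your argument.
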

\begin{proof}
The existence of the submatrix $M$ is guaranteed by lemma \ref{l. M exists}. We already know that 
$\Si\subset\VJ$. Let $p=(x,y,z)\in\V(\J)$. In particular, $x_1^{\alpha_1}\cdots x_l^{\alpha_l}=0$. 
Thus $x_i=0$ for some $i$, hence $x_i=0$ for all $i\in\{1,\ldots,l\}$ according to corollary \ref{c. equations}. 
By the same corollary, we also have $y_j=0$ for all $j\in\{1,\ldots,m\}$. Therefore, $p=(0,0,z)\in\Si$.
\end{proof}

As we mentioned earlier, for a complete intersections $X$ we always have $\V(\J)=\Si$. The following two
examples show that the family of toric surfaces having singular locus of dimension 1 is another family
for which we always have $\V(\J)=\Si$. Indeed, using $\mathtt{SINGULAR}$ $\mathtt{4}$-$\mathtt{0}$-$\mathtt{2}$ 
(\cite{DGPS}), we verified that these example are not Cohen-Macaulay at the origin. This implies that both examples are 
not complete intersections.

\begin{exam}\label{e. two orbits}
Let $\Gamma_0=\{(2,0),(3,0),(2,6),(0,4),(0,5)\}\subset\Z^2$ and $\Gamma=\Z_{\geq0}\Gamma_0$.
Using \cite[Algorithm 4.5]{St} with the lex order, it can be shown that $I_{\Gamma}=\langle w^5-t^4,y^2t^6-z^3w^3,y^2w^2t^2-z^3,
xt^2-zw,xw^4-zt^2,xz^2-y^2w^3,x^2w^3-z^2,x^2zw-y^2t^2,x^3-y^2\rangle.$
The singular locus of $X=\V(\IG)$ is
$$\Si=\{(x,y,0,0,0)\in X\}\cup\{(0,0,0,w,t)\in X\}.$$
In particular, $\V(\J)=\Si$ for any choice of the ideal $\J$.
\end{exam}

\begin{exam}
Let $\Gamma_0=\{(2,0),(1,2),(0,3),(0,5)\}\subset\Z^2$. As before, $\IG=\langle z^5-w^3,xw^2-y^2z^2,xz^3-y^2w,x^2zw-y^4\rangle$.
Let $\J$ be the ideal generated by all $2\times2$ minors of the first two rows of $\Jc(\IG)$. Then $$\V(\J)=\{(x,0,0,0)\in X\}=\Si.$$
\end{exam}


\subsection{$\dim\Si=0$}

Recall the notation of remark \ref{r. rank}: $\IG=\langle f_1,\ldots,f_s \rangle$, 
$f_{\lambda}=x^{A_{\lambda}}y^{B_{\lambda}}z^{C_{\lambda}}-x^{A'_{\lambda}}y^{B'_{\lambda}}z^{C'_{\lambda}}$,
$\lambda=1,\ldots s$, and $f_1,\ldots,f_r$ are such that $\rk(\Jc(f_1,\ldots,f_r))=r$. In this section we show that 
under the assumption $\dim\Si=0$, we have $\Si\subsetneq\V(\J)$. 

\begin{rem}
Notice that $\dim\Si=0$ implies $\Si=\{\cero\}$.
\end{rem}

\begin{lem}\label{l. 0 sing pt}
Let $X$ be the toric surface defined by $\Gamma$ and let $J\subset\C[X]$ be a non-empty set of monomials. 
Then $\V(J)=\{\cero\}$ if and only if monomials of the form $x_1^{\alpha_1}\cdots x_l^{\alpha_l}$ and
$z_1^{\gamma_1}\cdots z_n^{\gamma_n}$ appear in $J$, for some $(\alpha_1,\ldots,\alpha_l)\in\N^l
\setminus\{\zl\}$ and $(\gamma_1,\ldots,\gamma_n)\in\N^n\setminus\{\zn\}$.
\end{lem}
\begin{proof}
If such monomials appear in $J$ then, by corollary \ref{c. equations}, $\V(J)=\{\cero\}$.
Now suppose that $\V(J)=\{\cero\}$. Suppose that every monomial in $J$ has some of the variables 
$x_i$ or $y_j$. By corollary \ref{c. z-z}, $(\zl,\zm,\un)\in\V(J)$. This is a contradiction. 
Therefore, a monomial of the form $z_1^{\gamma_1}\cdots z_n^{\gamma_n}$ must appear in $J$. 
Similarly, not all monomials in $J$ have the variables $y_j$ or $z_k$ so there must be a monomial of 
the form $x_1^{\alpha_1}\cdots x_l^{\alpha_l}$.
\end{proof}

For the following proposition we need to introduce some notation. For $K_1\subset\{1,\ldots,l\}$, we denote as $g^1_{K_1}:\{1,\ldots,l\}\rightarrow\{0,1\}$ 
its characteristic function. Similarly, $g^2_{K_2}$ and $g^3_{K_3}$ denote the characteristic functions on $\{1,\ldots,m\}$ and $\{1,\ldots,n\}$,
respectively. In addition, let $N=l+m+n$ and recall that $r=N-2$. Consider the following $(r\times N)$-matrix:
\[
M=\left( 
\begin{array}{ccc}
A_1 & B_1 & C_1             \\
\vdots & \vdots & \vdots   \\
A_r & B_r & C_r
\end{array} \right)
=\left( 
\begin{array}{ccccccccc}
A_{11}   & \cdots & A_{1l}  & B_{11} & \cdots & B_{1m} & C_{11} & \cdots & C_{1n}  \\
\vdots    &         & \vdots  &  \vdots  &          & \vdots& \vdots     &           & \vdots   \\
A_{r1}   & \cdots & A_{rl}  & B_{r1}  & \cdots & B_{rm} & C_{r1}   & \cdots & C_{rn}
\end{array} \right).
\]
\begin{pro}\label{p. dim 0}
Let $X\subset\C^{N}$ be the toric surface defined by $\Gamma$ and suppose that $X$ is not a 
hypersurface. Let $J$ be the set of monomials given by all $r\times r$ minors of $\Jc(f_1,\ldots,f_r)$. 
Then $J$ cannot contain, simultaneously, monomials of the form $x_1^{\alpha_1}\cdots x_l^{\alpha_l}$ 
and $z_1^{\gamma_1}\cdots z_n^{\gamma_n}$ for some $(\alpha_1,\ldots,\alpha_l)\in\N^l\setminus\{\zl\}$ and 
$(\gamma_1,\ldots,\gamma_n)\in\N^n\setminus\{\zn\}$. In particular, $\{\cero\}\subsetneq\V(J)=\V(\J)$.
\end{pro}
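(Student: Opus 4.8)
The plan is to use the combinatorial description of the minors from Lemma \ref{l. minors} to show that a minor of pure $x$-type and a minor of pure $z$-type cannot coexist in $J$ unless $X$ is a hypersurface. Fix the notation from that lemma: for a two-element index set $K = K_1 \sqcup K_2 \sqcup K_3$, the corresponding minor of $\Jc(f_1,\ldots,f_r)$ is, modulo $\IG$, the monomial $\det(R_K)$ times $x^{A_1+\cdots+A_r}y^{B_1+\cdots+B_r}z^{C_1+\cdots+C_r}$ divided by $x_1\cdots x_l y_1 \cdots y_m z_1 \cdots z_n$ and then multiplied back by the variables indexed by $K$. Write $\mathbf{a} = A_1 + \cdots + A_r \in \N^l$, $\mathbf{b} = B_1 + \cdots + B_r \in \N^m$, $\mathbf{c} = C_1 + \cdots + C_r \in \N^n$ for these fixed exponent totals. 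The key observation is that the only part of the formula that depends on the choice of $K$ is the scalar $\det(R_K)$ (which only affects whether the minor vanishes) and the combinatorial correction $\left(\prod_{i\in K_1} x_i \prod_{j \in K_2} y_j \prod_{k \in K_3} z_k\right) / (x_1\cdots x_l y_1 \cdots y_m z_1 \cdots z_n)$.

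First I would translate the hypothesis "some minor is of the form $x_1^{\alpha_1}\cdots x_l^{\alpha_l}$" into conditions on $\mathbf{b}$, $\mathbf{c}$, and $K$. For such a minor to have no $y$ or $z$ variables, every $y_j$ and $z_k$ that appears in the numerator must be cancelled by the denominator and not reintroduced by $K$; since the denominator supplies each variable to the first power, this forces $b_j \le 1$ for all $j$ and $c_k \le 1$ for all $k$, forces $K_2 = \emptyset$ and $K_3 = \emptyset$ (so $K = K_1$ with $|K_1| = 2$), and forces $b_j = 1$, $c_k = 1$ for every $j,k$ (each such variable must be cancelled, and none may survive). In particular $\mathbf{b} = \um$ and $\mathbf{c} = \un$. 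Symmetrically, the existence of a pure-$z$ minor forces $\mathbf{a} = \ul$ and $\mathbf{b} = \um$ and $K = K_3$ with $|K_3| = 2$. Note $\mathbf{a}, \mathbf{b}, \mathbf{c}$ are the \emph{same} fixed vectors for all minors of $\Jc(f_1,\ldots,f_r)$, so if $J$ contained both kinds of minor we would get $\mathbf{a} = \ul$, $\mathbf{b} = \um$, $\mathbf{c} = \un$ simultaneously.

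Next I would derive a contradiction from $\mathbf{a} = \ul$, $\mathbf{b} = \um$, $\mathbf{c} = \un$ together with the non-hypersurface hypothesis. The condition $A_1 + \cdots + A_r = \ul$ with all $A_\lambda \in \N^l$ means: summing $r$ vectors in $\N^l$ gives the all-ones vector, so at most $l$ of the $A_\lambda$ are nonzero and those that are nonzero are distinct standard basis vectors $e_i$. But Remark \ref{r. minimal} forbids any row-exponent $(A_\lambda, B_\lambda, C_\lambda)$ from being a single standard basis vector; a nonzero $A_\lambda$ of the form $e_i$ would be allowed only if accompanied by nonzero entries in the $B$ or $C$ blocks of that row — but then $\mathbf{b}$ or $\mathbf{c}$ would pick up a contribution there. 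Running the same argument for $\mathbf{b} = \um$ and $\mathbf{c} = \un$ and counting the total number of nonzero entries available across all $r = N - 2$ rows against the $N = l + m + n$ ones we must produce, I expect a clean pigeonhole contradiction unless $r$ is large relative to the structure — and $r = N-2 < N$, combined with the minimality constraint of Remark \ref{r. minimal} forcing each row to carry at least two units of "weight" in the wrong places, should close it. This is precisely where the non-hypersurface hypothesis enters: $X$ is a hypersurface iff $r = 1$, and the counting is only an obstruction when $r \ge 2$; I would isolate and state the arithmetic cleanly here, as this is the main obstacle of the proof. Once the contradiction is established, $J$ cannot contain both types of monomial, so by Lemma \ref{l. 0 sing pt} (contrapositive) $\V(J) \ne \{\cero\} = \Si$; since $\J$ is generated by the minors in $J$ we have $\V(\J) = \V(J)$, and since Remark \ref{r. inclusion} gives $\Si \subseteq \V(\J)$, we conclude $\{\cero\} \subsetneq \V(\J) = \V(J)$, as claimed.
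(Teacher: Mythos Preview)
Your overall strategy matches the paper's: use the explicit monomial formula from Lemma~\ref{l. minors} together with Lemma~\ref{l. x-x} to pin down the column-sum vectors $\mathbf a,\mathbf b,\mathbf c$, and then get a contradiction by counting against Remark~\ref{r. minimal}. But there are two genuine gaps.

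\textbf{First gap (repairable).} Your deduction that a pure-$x$ minor forces $K_2=K_3=\emptyset$ and $\mathbf b=\um$, $\mathbf c=\un$ is not justified. The vanishing of the $y_j$-exponent only says $b_j-1+g^2_{K_2}(j)=0$, i.e.\ $b_j=1$ when $j\notin K_2$ and $b_j=0$ when $j\in K_2$; so $K_2\neq\emptyset$ is perfectly compatible with a pure-$x$ minor provided the corresponding $b_j$'s vanish. The same applies to $K_3$ and to $K'_1$ on the $z$-side. The paper handles this by a short case analysis (its cases (I) and (II)), and in fact your counting argument, once carried out, disposes of these extra cases easily: if any of $K'_1,K_2,K_3$ is nonempty then the total entry-sum of $M$ is at most $N-1$, so $2(N-2)\le N-1$ forces $N\le3$, contradicting the non-hypersurface hypothesis. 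So this gap is real but minor.

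\textbf{Second gap (the real problem).} Your pigeonhole does \emph{not} close the argument. In the main case $\mathbf a=\ul$, $\mathbf b=\um$, $\mathbf c=\un$, the inequality you get is $2r\le N$, i.e.\ $2(N-2)\le N$, i.e.\ $N\le4$. The hypersurface hypothesis only rules out $N=3$; the case $N=4$ (so $r=2$, and necessarily $l=n=2$, $m=0$) survives the count and is genuinely possible at this stage. The paper spends a full paragraph on this case: since each row of $M$ then has exactly two $1$'s, primality of $\IG$ pins down $f_1,f_2$ up to two possibilities, and a direct computation using $\cero\in X$ forces $f_1=f_2$, contradicting $\mathrm{rank}\,\Jc(f_1,f_2)=2$. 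Without some argument specific to $N=4$, your proof is incomplete. You should either reproduce this endgame or find an alternative way to exclude $r=2$.
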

\begin{proof}
The last statement follows from lemmas \ref{l. minors} and \ref{l. 0 sing pt}.
To prove the proposition, we proceed by contradiction. Suppose that $J$ contains monomials 
$x^{\alpha}=x_1^{\alpha_1}\cdots x_l^{\alpha_l}$ and $z^{\gamma}=z_1^{\gamma_1}\cdots z_n^{\gamma_n}$. 
From lemma \ref{l. minors}, there exist $K_1,K'_1\subset\{1,\ldots,l\}$, $K_2,K'_2\subset\{1,\ldots,m\}$ 
and $K_3,K'_3\subset\{1,\ldots,n\}$ such that $|K_1\sqcup K_2\sqcup K_3|=|K'_1\sqcup K'_2\sqcup K'_3|=2$ and
\begin{align}
x^{\alpha}&\equiv \theta_1 x^{\textbf{A}-\ul}y^{\textbf{B}-\um}z^{\textbf{C}-\un}\cdot\prod_{i\in K_1}x_i\prod_{j\in K_2}y_j\prod_{k\in K_3}z_k,\mod\IG,\tag{*}\\
z^{\gamma}&\equiv \theta_2 x^{\textbf{A}-\ul}y^{\textbf{B}-\um}z^{\textbf{C}-\un}\cdot\prod_{i\in K'_1}x_i\prod_{j\in K'_2}y_j\prod_{k\in K'_3}z_k,\mod\IG,\tag{**}
\end{align}
for some $\theta_1,\theta_2\in\Z\setminus\{0\}$, where $\textbf{A}=\sum_{\lambda=1}^{r}A_{\lambda}$, $\textbf{B}=\sum_{\lambda=1}^{r}
B_{\lambda}$ and $\textbf{C}=\sum_{\lambda=1}^{r}C_{\lambda}$. Let $K=K_1\sqcup K_2\sqcup K_3$ and $K'=K'_1\sqcup K'_2\sqcup K'_3$. 
From lemma \ref{l. x-x}, $x^{\alpha}\not\equiv z^{\gamma}\mod\IG$. Therefore $K\neq K'$.
The congruence (*) gives place to the following set of equations (see lemma \ref{l. x-x}):
\begin{align}
\sum_{i=1}^l\alpha_i(a_i,b_i)&=\sum_{i=1}^l(\sum_{\lambda=1}^r A_{\lambda i}-1+g^1_{K_1}(i))(a_i,b_i),\label{e. eq 1.1}\\ 
0&=B_{1 j}+\cdots+B_{r j}-1+g^2_{K_2}(j), &&j\in\{1,\ldots,m\},\label{e. eq 1.2}\\
0&=C_{1 k}+\cdots+C_{r k}-1+g^3_{K_3}(k),&&k\in\{1,\ldots,n\}.\label{e. eq 1.3}
\end{align}
Similarly, the congruence (**) gives place to the following set of equations:
\begin{align}
0&=A_{1 i}+\cdots+A_{r i}-1+g^1_{K'_1}(i), &&i\in\{1,\ldots,l\},\label{e. eq 2.1}\\
0&=B_{1 j}+\cdots+B_{r j}-1+g^2_{K'_2}(j),&&j\in\{1,\ldots,m\},\label{e. eq 2.2}\\
\sum_{k=1}^n\gamma_k(e_k,f_k)&=\sum_{k=1}^n(\sum_{\lambda=1}^r C_{\lambda k}-1+g^3_{K'_3}(k))(e_k,f_k)).\label{e. eq 2.3}
\end{align}
By substituting equation (\ref{e. eq 2.1}) in (\ref{e. eq 1.1}) and equation (\ref{e. eq 1.3}) in (\ref{e. eq 2.3}) we obtain:
\begin{align}
\sum_{i=1}^l\alpha_i(a_i,b_i)&=\sum_{i=1}^l(\sum_{\lambda=1}^r g^1_{K_1}(i)-g^1_{K'_1}(i))(a_i,b_i),\notag\\ 
\sum_{k=1}^n\gamma_k(e_k,f_k)&=\sum_{k=1}^n(\sum_{\lambda=1}^r g^3_{K'_3}(k)-g^3_{K_3}(k))(e_k,f_k). \notag
\end{align}
To derive a contradiction from the previous equations we first observe that:
\begin{itemize}
\item[(i)] $K_1\neq\emptyset$ and $K'_3\neq\emptyset$.
\item[(ii)] $K'_1\subset K_1$ and $K_3\subset K'_3$.
\item[(iii)] $K_2=K'_2$.
\end{itemize}
Indeed, (i) follows from the fact that the right-hand part of (*) and (**) are monomials, and so $g^1_{K_1}(i)-g^1_{K'_1}(i)\geq0$,
for every $i\in\{1,\ldots,l\}$ and $g^3_{K'_3}(k)-g^3_{K_3}(k)\geq0$, for every $k\in\{1,\ldots,n\}$. To prove (ii) notice that $\alpha\in\N^l\setminus\{\zl\}$ 
and $\gamma\in\N^n\setminus\{\zn\}$. Finally, (iii) follows from combining equations (\ref{e. eq 1.2}) and (\ref{e. eq 2.2}).

From (i), we can consider the following cases:
\begin{itemize}
\item[(I)] $K_1=\{i_0\}$.
\item[(II)] $K_1=\{i_1,i_2\}$, where $i_1\neq i_2$. 
\end{itemize}
According to (ii), case (I) is in turn divided into the following two cases:
\begin{itemize}
\item[(I.1)] $K'_1=\{i_0\}$. Then $K_1=K'_1$. Since $K'_3\neq\emptyset$ it follows that $K_2=K'_2=\emptyset$ implying $K_3\neq\emptyset$.
Thus $K_3=K_3'$ so $K=K'$. This is a contradiction.
\item[(I.2)] $K'_1=\emptyset$. First notice that $K_1=\{i_0\}$ implies that $K_3=\{k_0\}$ or $K_3=\emptyset$. Suppose that $K_3=\{k_0\}$.
Then $K_2=\emptyset.$ Under these assumptions ($K'_1=\emptyset,K_3=\{k_0\},K_2=\emptyset$), equations (\ref{e. eq 1.2}), (\ref{e. eq 1.3}) and (\ref{e. eq 2.1}) 
imply that $C_{\lambda k_0}=0$ for every $\lambda\in\{1,\ldots,r\}$ and every other column of $M$ has an entry equal to 1 and all other entries in the column are 0.
Therefore, there are exactly $N-1$ entries equal to $1$ and the others are equal to $0$. According to remark \ref{r. minimal}, there must be at 
least two entries on every row of $M$ equal to $1$.  There are $r=N-2$ rows in $M$. Since $X$ is not a hypersurface, $N>3$. This is a contradiction.

If $K_3=\emptyset$ then $K_2=\{j_0\}$ and equations (\ref{e. eq 1.2}), (\ref{e. eq 1.3}) and (\ref{e. eq 2.1}) imply that $B_{\lambda j_0}=0$ for every $\lambda\in\{1,\ldots,r\}$ 
and some $j_0\in\{1,\ldots,m\}$, and every other column of $M$ has an entry equal to 1 and all other entries are 0. We arrive to the same contradiction.
\end{itemize}
Now we divide case (II) into the following three cases:
\begin{itemize}
\item[(II.1)] $K'_1=\{i_1,i_2\}$. In particular, $K'_3=\emptyset$. This contradicts (i).
\item[(II.2)] $K'_1=\{i_1\}$ or $K'_1=\{i_2\}$. First notice that $K_1=\{i_1,i_2\}$ implies that $K_2=K_3=\emptyset$. Now, exactly as in case (I.2), it follows that 
$A_{\lambda i_1}=0$ for every $\lambda\in\{1,\ldots,r\}$ and every other column of $M$ has an entry equal to 1 and all other entries are 0. 
We arrive to the same contradiction. Case $K'_1=\{i_2\}$ is analogous.
\item[(II.3)] $K'_1=\emptyset$. As in the previous item $K_2=K_3=\emptyset$. Then equations (\ref{e. eq 1.2}), (\ref{e. eq 1.3}) and (\ref{e. eq 2.1}) imply that every column of $M$ 
has an entry equal to 1 and all other entries are 0. Therefore, there are exactly $N$ entries equal to 1 in the matrix $M$ and all other entries are 0. 
In order not to contradict remark \ref{r. minimal} we must have $N=4$ and so $r=2$. 
\end{itemize}
Since $4=N=l+m+n$ and $|K_1|=2$, it follows that $l=2$, $m=0$, and $n=2$.
Since $\IG$ is a prime ideal, we only have two possible choices for 
$(A_{\lambda},C_{\lambda})\in\N^4$, $\lambda=1,2$:
\begin{itemize}
\item[(a)] $(A_{11},A_{12},C_{11},C_{12})=(1,0,1,0)$, $(A_{21},A_{22},C_{21},C_{22})=(0,1,0,1)$.
\item[(b)] $(A_{11},A_{12},C_{11},C_{12})=(1,0,0,1)$, $(A_{21},A_{22},C_{21},C_{22})=(0,1,1,0)$.
\end{itemize}
Let us consider case (a). The polynomials $f_1,f_2$ look like:
\begin{align}
f_1&=x_1z_1-x_2^{A'_{12}}z_2^{C'_{12}},\notag\\
f_2&=x_2z_2-x_1^{A'_{21}}z_1^{C'_{21}},\notag
\end{align}
where, necessarily, $A'_{12}\geq1$, $C'_{12}\geq1$, $A'_{21}\geq1$, and $C'_{21}\geq1$.
Assume that $A'_{12}\leq C'_{12}$. Since $f_1,f_2\in\IG$ we obtain the following congruences 
\begin{align}
x_1z_1&\equiv(x_2z_2)^{A'_{12}}z_2^{C'_{12}-A'_{12}}\mod\IG,\notag\\
x_2z_2&\equiv x_1^{A'_{21}}z_1^{C'_{21}}\mod\IG.\notag
\end{align}
In particular, $x_1z_1\equiv x_1^{A'_{12}A'_{21}} z_1^{A'_{12}C'_{21}}z_2^{C'_{12}-A'_{12}}\mod\IG$. 
Thus, 
$$x_1z_1(1-x_1^{A'_{12}A'_{21}-1} z_1^{A'_{12}C'_{21}-1}z_2^{C'_{12}-A'_{12}})\in\IG.$$
Since $\IG$ is a prime ideal which does not contain monomials, it follows that $1-x_1^{A'_{12}A'_{21}-1} z_1^{A'_{12}C'_{21}-1}z_2^{C'_{12}-A'_{12}}\in\IG$.
Now, $\cero\in X$ implies $A'_{12}A'_{21}=1$, $A'_{12}C'_{21}=1$, and $C'_{12}=A'_{12}$. We conclude that 
$A'_{12}=A'_{21}=1$ and $C'_{12}=C'_{21}=1$, i.e., $f_1=f_2$. This is a contradiction since
we are assuming that $\Jc(f_1\mbox{ } f_2)$  has rank 2.  If $A'_{12}\geq C'_{12}$ proceed similarly. 
Finally, a completely analogous argument shows that case (b) implies a similar contradiction.
\end{proof}

\begin{coro}\label{c. dim 0}
Let $X\subset\C^{N}$ be the toric surface defined by $\Gamma$ and suppose that $X$ is not a 
complete intersection. Let $\IG$ be the binomial ideal defining $X$. Then there is no choice of 
$r$ binomials in $\IG$ such that the corresponding ideal $\J$ satisfies $\V(\J)=\{\cero\}$.
\end{coro}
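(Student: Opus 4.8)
The plan is to obtain the corollary as a direct consequence of proposition \ref{p. dim 0}, the only preliminary being to match hypotheses. Proposition \ref{p. dim 0} requires that $X$ not be a hypersurface, whereas here we assume $X$ is not a complete intersection; but every hypersurface in $\C^{N}$ is cut out by a single equation and hence is a complete intersection, so the assumption of the corollary implies that $X$ is not a hypersurface (in particular $N=l+m+n>3$), and proposition \ref{p. dim 0} becomes available.

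Next I would take an arbitrary choice of $r$ binomials $f_1,\ldots,f_r\in\IG$ whose Jacobian has rank $r$ (this is the situation of theorem \ref{t. ideal Nash}, in which the associated ideal $\J$ defines the Nash blowup; if instead $\rk\Jc(f_1,\ldots,f_r)<r$, then all $r\times r$ minors vanish in the domain $\C[x,y,z]/\IG$, so $\J=0$ and $\V(\J)=X$ is two-dimensional, and there is nothing to prove). Since $\IG$ is generated by binomials, I can enlarge $\{f_1,\ldots,f_r\}$ to a finite family of binomials generating $\IG$, each recording a relation among the generators of $\Gamma$; this is exactly the setup of remark \ref{r. rank}, with $f_1,\ldots,f_r$ serving as the first $r$ members (no renaming is needed, their Jacobian already having rank $r$). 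Hence proposition \ref{p. dim 0} applies to $f_1,\ldots,f_r$.

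Finally, letting $J$ be the set of monomials formed by all $r\times r$ minors of $\Jc(f_1,\ldots,f_r)$, lemma \ref{l. minors} gives $\V(\J)=\V(J)$. Proposition \ref{p. dim 0} then says that $J$ cannot contain both a monomial of the form $x_1^{\alpha_1}\cdots x_l^{\alpha_l}$, with $(\alpha_1,\ldots,\alpha_l)\in\N^l\setminus\{\zl\}$, and a monomial of the form $z_1^{\gamma_1}\cdots z_n^{\gamma_n}$, with $(\gamma_1,\ldots,\gamma_n)\in\N^n\setminus\{\zn\}$; by lemma \ref{l. 0 sing pt} this is equivalent to $\V(J)\neq\{\cero\}$, and in fact $\{\cero\}\subsetneq\V(\J)$. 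Since this holds for every admissible choice of $r$ binomials of $\IG$, the corollary follows (and, combined with $\Si=\{\cero\}$ when $\dim\Si=0$, this yields theorem \ref{t.  main}(ii)). There is no genuine obstacle at this stage — all the real work sits inside proposition \ref{p. dim 0} — and within the corollary the only point needing a moment's care is the reduction of an arbitrary collection of $r$ binomials of $\IG$ to the generating family used in that proposition.
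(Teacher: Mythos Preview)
Your argument is correct. The reduction of ``not a complete intersection'' to ``not a hypersurface'' via the trivial implication (every hypersurface is, by definition, a complete intersection) is valid, and after that proposition \ref{p. dim 0} gives the conclusion. Your care in embedding an arbitrary $r$-tuple of binomials into a generating family of relations so that the setup of remark \ref{r. rank} (and hence remark \ref{r. minimal}, used in the proof of proposition \ref{p. dim 0}) applies is also correct and worth noting.

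The paper takes a genuinely different route at the matching-of-hypotheses step. Instead of invoking the trivial implication, it uses the ambient section hypothesis $\Si=\{\cero\}$ to argue that a complete intersection surface with isolated singularity is normal, and then applies Riemenschneider's characterization (a normal toric surface is a complete intersection if and only if it is a hypersurface) to conclude that, in this setting, complete intersection and hypersurface are \emph{equivalent}. Only the forward direction of that equivalence is needed for the corollary, and that direction is exactly the trivial one you use; the paper's detour through normality and \cite{Ri} yields the extra information that, among toric surfaces with $0$-dimensional singular locus, the complete intersections are precisely the hypersurfaces. Your proof is more elementary and entirely sufficient for the corollary as stated, while the paper's argument records a sharper dichotomy that motivates why the hypothesis ``not a hypersurface'' in proposition \ref{p. dim 0} is the natural one.
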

\begin{proof}
Since we are assuming that $\Si=\{\cero\}$, then being complete intersection is equivalent to being a hypersurface. 
This is because a surface which is a complete intersection and has isolated singularity is necessarily normal. But then we use the fact
that a normal toric surface is a complete intersection if and only if it is a hypersurface (see \cite{Ri}). The corollary then follows
directly from the proposition.
\end{proof}



\section{Some remarks on higher-dimensional toric varieties}

It is natural to ask whether the results obtained in previous sections have an analogue for higher-dimensional toric varieties. 
Unfortunately, the method used to prove our main theorem is unlikely to work for higher dimensions. Indeed, in our proof 
two key ingredients were required:

\begin{enumerate}
\item The existence of an specific set of defining equations for the toric surface (cf. corollary \ref{c. equations}).
\item A characterization of complete intersection for normal toric surfaces (see \cite{Ri}).
\end{enumerate}

As far as we know statements 1. and 2. do not have a higher-dimensional analogue. Despite these inconveniences, 
there are still some remarks that we can do regarding our problem in higher-dimensional toric varieties.

\begin{rem}
Let $X$ be a toric variety of any dimension such that $\Si$ is the union of the closures of all the orbits corresponding to 
maximal faces of the defining semigroup (for the correspondence between faces of a semigroup and orbits see \cite[Proposition 15]{GT}).
Then $\V(\J)=\Si$, for any ideal $\J$ defining the Nash blowup of $X$ (see the first paragraph of section \ref{sub. dim 1}).
\end{rem}

\begin{rem}
Let $X$ be a toric variety of dimension 3. Suppose that $\Si$ is the union of the closures of all but one of the orbits corresponding 
to maximal faces of the defining semigroup. Using a completely analogous argument to that of section \ref{sub. dim 1}, it can
be shown that we can always choose an ideal $\J$ defining the Nash blowup of $X$ satisfying $\V(\J)=\Si$. 
\end{rem}

We computed several examples of three-dimensional toric varieties in order to explore our question in that case. We were 
expecting to have a positive answer whenever the singular locus had maximal dimension as in the previous remark. Unfortunately,
we were not able to find any clear relation between the combinatorics of the faces defining the singular locus and
an answer to our question.



\section*{Acknowledgements}

The second author would like to thank Arturo Giles for explaining him his results appearing in \cite{Gi}. Those discussions 
motivated the question that we explored in this note.

\vspace{.5cm}
{\tiny \textsc {E. Ch\'avez Mart\'inez, Instituto de Matem\'aticas, UNAM, Unidad Cuernavaca.} Email: ecm\_2891@hotmail.com}\\
{\tiny \textsc {D. Duarte, Catedr\'atico CONACYT-UAZ.} Email: adduarte@matematicas.reduaz.mx}

\end{document}